\setlist[itemize]{noitemsep, topsep=0pt, leftmargin=0cm}
\newtheorem{theorem}{Theorem}[section]
\newtheorem{lemma}[theorem]{Lemma}
\newtheorem{corollary}[theorem]{Corollary}
\theoremstyle{remark}
\newtheorem{example}[theorem]{Example}
\newtheorem{assumption}[]{Assumption}
\newtheorem{remark}[theorem]{Remark}
\newtheorem{definition}[theorem]{Definition}
 \DeclareMathOperator{\grad}{grad}
\DeclareMathOperator{\gap}{gap}
\newcommand{\Capp}{C_{\mathrm{app}}}
\newcommand{\Creg}{C_{\mathrm{reg}}}
\newcommand{\om}{\varOmega}
\newcommand{\oh}{\varOmega_h}
\newcommand{\veps}{\varepsilon}
\newcommand{\dist}[1]{\mathop{\textstyle{\mathrm{dist}_{{#1}}}}}
\newcommand{\dom}{\mathop{\mathrm{dom}}}
\newcommand{\ran}{\mathop{\mathrm{ran}}}
\newcommand{\CCC}{\mathbb{C}}
\renewcommand{\Re}{{\ensuremath{\mathrm{Re\,}}}}
\newcommand{\R}{\mathbb{R}}
\newcommand{\C}{\mathbb{C}}
\newcommand{\cA}{A}
\newcommand{\cH}{{\mathcal H}}
\newcommand{\cO}{{\mathcal O}}
\newcommand{\cV}{{\mathcal V}}
\newcommand{\NN}{\mathbb{N}}
\newcommand{\RR}{\mathbb{R}}
\newcommand{\ii}{\mathrm{i}}
\newcommand{\logLogSlopeTriangle}[5]
{

    \pgfplotsextra
    {
        \pgfkeysgetvalue{/pgfplots/xmin}{\xmin}
        \pgfkeysgetvalue{/pgfplots/xmax}{\xmax}
        \pgfkeysgetvalue{/pgfplots/ymin}{\ymin}
        \pgfkeysgetvalue{/pgfplots/ymax}{\ymax}

        \pgfmathsetmacro{\xArel}{#1}
        \pgfmathsetmacro{\yArel}{#3}
        \pgfmathsetmacro{\xBrel}{#1-#2}
        \pgfmathsetmacro{\yBrel}{\yArel}
        \pgfmathsetmacro{\xCrel}{\xArel}

        \pgfmathsetmacro{\lnxB}{\xmin*(1-(#1-#2))+\xmax*(#1-#2)} 
        \pgfmathsetmacro{\lnxA}{\xmin*(1-#1)+\xmax*#1} 
        \pgfmathsetmacro{\lnyA}{\ymin*(1-#3)+\ymax*#3} 
        \pgfmathsetmacro{\lnyC}{\lnyA+#4*(\lnxA-\lnxB)}
        \pgfmathsetmacro{\yCrel}{\lnyC-\ymin)/(\ymax-\ymin)} 

        \coordinate (A) at (rel axis cs:\xArel,\yArel);
        \coordinate (B) at (rel axis cs:\xBrel,\yBrel);
        \coordinate (C) at (rel axis cs:\xCrel,\yCrel);

        \draw[#5]   (A)-- node[pos=0.5,anchor=north] {{\tiny{$1$}}}
                    (B)-- 
                    (C)-- node[pos=0.5,anchor=west] {{\tiny{#4}}}
                    cycle;
    }
}
\newcommand{\RRR}{\mathbb{R}}
\newcommand{\qii}[1]{q_i^{\!({{#1}})}}
\newcommand{\Eh}[1]{{E_h^{({#1})}}}
\newcommand{\Eht}[1]{\tilde{E}_h^{({#1})}}
\newcommand{\hdist}{\mathop{\mathrm{dist}}}
\newcommand{\et}{\tilde{e}}
  \newcommand{\revv}[1]{{#1}}
\newcommand{\caF}{c_{a, F}}
\newcommand{\lmax}{\Lambda_h^{\mathrm{max}}}
\begin{document}

\title[Discretization errors in subspace iteration]{Spectral discretization errors in filtered subspace iteration}

\author[J. Gopalakrishnan]{Jay Gopalakrishnan}
\address{Portland State University, PO Box 751, Portland, OR 97207-0751, USA}
\email{gjay@pdx.edu}

\author[L.~Grubi\v{s}i\'{c}]{Luka Grubi\v{s}i\'{c}}
\address{University of Zagreb, Bijeni\v{c}ka 30, 10000 Zagreb, Croatia}
\email{luka.grubisic@math.hr}

\author[J. Ovall]{Jeffrey Ovall}
\address{Portland State University, PO Box 751, Portland, OR 97207-0751, USA}
\email{jovall@pdx.edu}

\date{\today}

\begin{abstract}
  We consider filtered subspace iteration for approximating a cluster of
  eigenvalues (and its associated eigenspace) of a (possibly
  unbounded) selfadjoint operator in a Hilbert space.  The algorithm is motivated
  by a quadrature approximation of an operator-valued contour integral
  of the resolvent. Resolvents on infinite dimensional spaces are
  discretized in computable finite-dimensional spaces before the
  algorithm is applied.  This study focuses on how such
  discretizations result in errors in the eigenspace approximations
  computed by the algorithm. The computed eigenspace is then used to
  obtain approximations of the eigenvalue cluster.  Bounds for the
  Hausdorff distance between the computed and exact eigenvalue
  clusters are obtained in terms of the discretization parameters
  within an abstract framework.  A realization of the proposed
  approach for a model second-order elliptic operator using a standard
  finite element discretization of the resolvent is described. Some
  numerical experiments are conducted to gauge the sharpness of 
  the theoretical estimates.
\end{abstract}

\thanks{This work was partially supported by the AFOSR
  (through AFRL Cooperative Agreement \#18RDCOR018 and grant
  FA9451-18-2-0031), the Croatian Science Foundation grant HRZZ-9345, 
  bilateral Croatian-USA grant (administered jointly by Croatian-MZO and NSF) and 
  NSF grant DMS-1414365. The numerical studies
  were facilitated by the equipment acquired using NSF's Major
  Research Instrumentation grant DMS-1624776.}

\maketitle

\section{Introduction}  \label{sec:introduction}

The goal of this study is to provide an analysis of discretization
errors when a subspace iteration algorithm is employed to compute
eigenvalues of selfadjoint partial differential operators. Instead of
a specific differential operator, we consider a general linear,
closed, selfadjoint operator $\cA: \dom(\cA) \subseteq \cH \to\cH$
(not necessarily bounded) in a complex Hilbert space $\cH$, whose
(real) spectrum is denoted by $\Sigma(\cA)$.  We are interested in
computationally approximating a subset $\Lambda$ of the spectrum that
consists of a finite collection of eigenvalues of finite multiplicity.
{\em Filtered subspace iteration}, a natural generalization of the
power method, for approximating $\Lambda$ and its corresponding
eigenspace (invariant subspace) is roughly described as follows.
First, the eigenspace of the cluster $\Lambda$ is transformed to the
dominant eigenspace of another, bounded operator called the
``filter.''  Next, a subspace iteration is applied using the bounded
filter.  Starting with an initial subspace (usually chosen randomly),
the bounded operator is repeatedly applied to it, generating a
sequence of subspaces that {approximates} the eigenspace of $\Lambda$.
Approximations of $\Lambda$ are obtained from the approximate
eigenspaces by a Rayleigh-Ritz procedure.  To apply this filtered
subspace iteration in practice requires finite rank approximations of
the resolvent at a few points along a contour, obtained by some discretization process, and it is the
errors incurred by such discretizations that are investigated here.

The exact eigenspace, namely the span of all the eigenvectors associated
with elements of $\Lambda$, is denoted by $E$. Then $ m = \dim E,$
being the sum of multiplicities of each element of $\Lambda$, is
finite, and we assume $m \ge 1.$ Throughout this paper, the
multiplicity $\ell$ of an eigenvalue $\lambda$ of an operator refers
to its algebraic multiplicity, i.e., $\lambda$ is a pole of order
$\ell$ of the resolvent of that operator. Recall that, for a
selfadjoint operator $A$, the algebraic multiplicity of $\lambda$
coincides with its geometric multiplicity, $\dim\ker(\lambda-A)$.

As mentioned above, the idea behind filtered subspace iteration is to
transform $E$ into the dominant eigenspace of certain filter
operators.  We shall see in the next section that the construction of
these filters can be motivated by approximations of a Dunford-Taylor
contour integral.  There has been a resurgence of interest in contour
integral methods for eigenvalues due to their excellent
parallelizability~\cite{Austin2015, Beyn2012, HuangStrutSun16,
  SakurSugiu03,GuttePolizTang15}.  Following~\cite{Austin2015}, we
identify two different classes of methods in the existing literature
that use contour integrals for computation of a targeted cluster of
{matrix eigenvalues}.  One class of methods, that often goes by the
name SSM~\cite{SakurSugiu03} (see also~\cite{Beyn2012,Sakurai}),
approximates $\Lambda$ by the eigenvalues of a system of moment
matrices based on contour integrals. The moment matrices are obtained
by approximating the integrals by a quadrature, and the spectral
approximation error depends on the accuracy of the quadrature.

The other class of methods, more related to our present contribution,
in that they apply filtered subspace iterations to matrices, are
referred to by the name FEAST~\cite{Polizzi2009} (see
also~\cite{GuttePolizTang15,Polizzi2014}).  These algorithms also use
quadratures to approximate a contour integral, but the eigenvalues are
approximated by a Rayleigh-Ritz procedure using the original
matrix. In our view, the use of quadratures in such algorithms is
essentially different from their use in SSM-like approaches.
Quadratures in FEAST are only used to develop the filter used in a
subspace iteration. A consequence of this is that the quadrature error
is not as relevant in FEAST as in SSM. The analysis in this paper will
show this in precise terms.

When $A$ is a differential operator on an infinite-dimensional space,
some approximations to bring the computations into finite-dimensional
spaces are necessary. The central concern in this paper is the study
of how these approximations affect the final spectral approximations
that filtered subspace iteration yields asymptotically.  The main
technical difficulty in analyzing discretization errors for the
unbounded operator eigenproblem is that many of the existing standard
tools~\cite{BabusOsbor91} (applicable for compact operators) are
\revv{not directly applicable to our situation. We present an abstract
  framework that allows one to study approximation of spectral
  clusters of unbounded selfadjoint operators with compact resolvent.
  Very general discretizations are allowed through a set of abstract
  assumptions.}

To quickly outline the approximation approach studied in this paper,
recall that the spectral projector onto $E$, which we denote by $S$,
is characterized by a Dunford-Taylor contour integral of the
resolvent~$R(z)$. Its $N$-term quadrature approximation is denoted by
$S_N$. In the expression defining $S_N$, when $R(z)$ is replaced by a
computable finite-rank approximation $R_h(z)$, we obtain $S_N^h$, a
practically computable filter.  Here $h$ is some discretization
parameter (such as the grid spacing) inversely related to a
computational finite-dimensional space.  By repeated application of
$S_N^h$, the iteration produces a sequence of subspaces
$\{\smash{\Eh\ell} : \ell=1,2, \ldots\}$, which we study.  Due to the
approximations, it is not obvious if the iterates $\smash{\Eh\ell}$
converge to some limit~$E_h$.  We begin our analysis by showing that
they do under certain sufficient conditions, after which we focus on
analyzing the limit.

To summarize the novelty of this work, this is the first work to study
the effect of the {\em discretization parameter~$h$} (in addition
to~$N$).  The errors in eigenspace approximations often need to be
measured in stronger norms than the base $\cH$-norm. The example of
elliptic differential operators on $\cH = L^2(\om)$ illustrates the
need to measure eigenfunction errors in a stronger norm like the
$H^1(\om)$-norm. To our knowledge, this is the first work to give
bounds for eigenspace discretization errors in $\cH$-norm as well as a
stronger $\cV$-norm.  We provide the first result showing that the
Hausdorff distance between the eigenvalue cluster computed by filtered
subspace iteration and $\Lambda$ converges to zero at predictable
rates as the discretization parameter $h\to 0$.  To highlight one more
conclusion from our analysis, increasing $N$ has little effect on the
spectral discretization error as measured by the gap between $E$ and
$E_h$ (although it may affect \cite{feast_infinite} the speed of
convergence of the filtered subspace iteration to $E_h$).

The rest of the paper is organized as follows.  In
Section~\ref{sec:preliminaries}, we describe precisely the
above-mentioned process of double approximation (going from $S$ to
$S_N^h$) and introduce the necessary assumptions for the error
analysis. Section~\ref{sec:limit} introduces the space to which 
filtered subspace iteration using $S_N^h$ converges. Bounds for the gap between
computed and exact eigenspaces are proved in Section~\ref{sec:space}.
Eigenvalue errors are then bounded using the square of this gap.
Analysis of a standard finite element discretization of the resolvent
of a model operator in Section~\ref{sec:fem} provides an example of
how abstract conditions on the resolvent might be verified in
practice. The practical performance of the algorithm with the
discretization is reported in Section~\ref{sec:Numerics}.


\section{Preliminaries}  \label{sec:preliminaries}

Let $\cA$, $\Lambda$ and $E$ be as discussed previously.  As already
mentioned, filters are linear operators on $\cH$ having $E$ as their
dominant eigenspace, in the sense made precise below.  

Suppose that $\Gamma\subset \C\setminus\Sigma(A)$ is a positively
oriented, simple, closed contour that encloses $\Lambda$ and excludes
$\Sigma(A)\setminus\Lambda$, and let $G\subset\C$ be the open set
whose boundary is $\Gamma$.  By the Cauchy Integral Formula,
\begin{align}\label{character}
r(\xi)=\frac{1}{2\pi\ii}\oint_\Gamma(z-\xi)^{-1}\,dz=
\begin{cases}
1,& \,\xi\in G,
\\
0,&\xi\in \mathbb{C}\setminus(G\cup\Gamma).
\end{cases}
\end{align}
Thus $r(\xi)$ equals a.e.\ the indicator function of $G$ in $\C$.
The associated (orthogonal) spectral projection $S:\cH\to\cH$ is the
bounded linear operator given by the Dunford-Taylor integral 
\begin{align}\label{SpectralProjection}
S=\frac{1}{2\pi\ii}\oint_\Gamma R(z)\,dz,
\end{align}
where $R(z) = (z-\cA)^{-1}$ is the resolvent, a bounded linear
operator on $\cH$ for each $z \in \Gamma$.  Since $\Gamma$ encloses
$\Lambda$ and no other element of $\Sigma(A)$, its well known that
\begin{align}\label{InvariantSubace}
E=\ran(S).
\end{align}
Furthermore, by functional calculus
(see \cite[Theorem~VIII.5]{ReedSimon72},~\cite[Theorem 5.9]{Schmu12}
or \cite[Section~6.4]{BuhlerSalamon18}),
if $(\lambda,\phi) \in \Sigma(\cA)\times\dom(\cA)$ satisfies
$A \phi = \lambda \phi$,
then $S\phi = r(A) \phi = r(\lambda)\phi$.
Since $r(\lambda)$ equals $1$ for all $\lambda \in \Lambda$
and equals $0$ for all other elements of $\Sigma(A)$,
the desired eigenspace $E$ of $A$ 
is now the dominant eigenspace of $S = r(A)$.
In this sense, $S$ is an {\em ideal filter.}


Motivated by quadrature approximations of~\eqref{character}, in the
same spirit as~\cite{Beyn2012,GuettelD,
  Polizzi2009,Polizzi2014,SakurSugiu03}, 
we approximate $r(\xi)$ by 
\begin{align}\label{ContourQuadrature}
r_N(\xi)=w_N + \sum_{k=0}^{N-1} w_k(z_k-\xi)^{-1}~,
\end{align}
for some  $w_k, z_k\in\C.$ 
The corresponding {\em rational filter} is the operator
\begin{align}\label{ApproxSpectralProjection}
S_N&=r_N(\cA)=w_N + \sum_{k=0}^{N-1}w_k R(z_k),
\end{align}
which can be viewed as an approximation of~$S$.  It is common to refer
to $S_N$, as well as the rational function $r_N(\xi)$, as the {\em
  filter.}  As in the case of~$S$, if
$(\lambda,\phi) \in \Sigma(\cA)\times\dom(\cA)$ satisfies
$A \phi = \lambda \phi$, then $S_N\phi = r_N(\lambda)\phi$.  In
particular, the set $\Lambda$ of eigenvalues of interest have been
mapped to $\{ r_N(\lambda): \lambda \in \Lambda \}$ by the filter.

These mapped eigenvalues are dominant eigenvalues of $S_N$ if
\begin{align}\label{FilterRequirement}
  \min_{\lambda\in\Lambda}|r_N(\lambda)|
  > \sup_{\mu\in \Sigma(\cA)\setminus\Lambda}|r_N(\mu)|
\end{align}
holds. This dominance can be obtained provided $\Lambda$ is strictly
separated from the remainder of the spectrum. To quantify the
separation, we consider the following strictly separated subsets of
$\RRR$ centered around $y\in \RRR$
\[
I_\gamma^y = \{ x \in \R: | x- y| \le \gamma\}, \qquad 
O_{\delta,\gamma}^y = \{ x \in \R: |x - y| \ge (1+\delta)\gamma\}.
\]
for some positive numbers $\gamma$ and $\delta$.  If the spectral
cluster of interest is within $I_\gamma^y$, then the number $\delta$
provides a measure of the relative gap between it and the rest of the
spectrum---relative to the radius $\gamma$ of the interval wherein we
seek eigenvalues.  Using the numbers $y, \gamma,$ and $\delta$, define
\begin{align}\label{ContractionFactor2}
W=\sum_{k=0}^{N}|w_k|,\quad\quad
\hat\kappa=
\frac{  \displaystyle\sup_{x \in O_{\delta,\gamma}^y}  |r_N(x)|}
{ \displaystyle \inf_{x \in I_\gamma^y}|r_N(x)|}.
\end{align}
These definitions help us to formulate the following assumption on the
filter and cluster separation.

\begin{assumption}
  \label{asm:rN}
  We assume there are $y\in\RR$,
  $\delta>0$ and $\gamma>0$ such that
  \begin{align}\label{SpectralGap}
    \Lambda\subset I_\gamma^y,
    \qquad 
    \Sigma(\cA)\setminus\Lambda \subset
    O_{\delta,\gamma}^y.
  \end{align}
  We assume that $r_N$ is a rational function of the
  form~\eqref{ContourQuadrature} with the property that
  $z_k \notin\overline{\Sigma(\cA)},$ $W<\infty$, and $\hat \kappa <1$.
\end{assumption}

Note that if $\hat\kappa < 1$, then \eqref{FilterRequirement} holds.
When an $N$-point trapezoidal rule is used for quadrature
approximation we obtain an $r_N$ as in~\eqref{ContourQuadrature} with
$w_N=0$. When the Zolotarev rational approximation of $r(\xi)$ is used
to construct $r_N$, the term $w_N$ is nonzero~\cite{GuttePolizTang15}.
In Section~\ref{sec:Numerics}, we shall use the so-called {\em
  Butterworth filter} which is obtained by setting $w_N=0$ and for
$k=0, \ldots, N-1$,
\begin{align}\label{CircleQuad}
z_k=\gamma e^{\ii
  (\theta_k+\phi)}+y,\quad\quad w_k=\gamma
e^{\ii (\theta_k+\phi)}/N.
\end{align}
with $\theta_k=2\pi k/N$ and $ \phi=\pm\pi/N.$ This filter as well as
several other examples of $r_N$ satisfying Assumption~\ref{asm:rN} are
studied in~\cite{feast_infinite}.

Next, we introduce a subspace $\cV \subseteq \cH$, motivated by the
need to prove results that bound discretization errors in norms
stronger than the $\cH$-norm. We place the following assumption and
give example classes of operators where the assumption
holds.
\begin{assumption}
  \label{asm:Vshort}
  Suppose there is a Hilbert space $\cV \subseteq \cH$ such that
  $E \subseteq \cV$, there is a $C_\cV>0 $ such that for all
  $u \in \cV$, $ \| u \|_\cH \le C_\cV \| u \|_\cV$, and $\cV$ is an
  invariant subspace of $R(z)$.
\end{assumption}

\begin{example}[$\cV$ is the whole space]
  \label{eg:case:A}
  Set $\cV = \cH$, with $(\cdot,\cdot)_\cV=(\cdot,\cdot)_\cH$. In this
  case it is obvious that all statements of Assumption~\ref{asm:Vshort}
  hold.~\hfill$\Box$
\end{example}


\begin{example}[$\cV$ is the domain of a positive form]
  \label{eg:case:B}
  Suppose $a(u,v)$ is a densely defined closed sesquilinear 
  Hermitian form { on $\cH$} and there is a $\delta>0$ such that 
  \begin{equation}
    \label{eq:3}
   a(v,v) \ge \delta \| v \|_{\cH}^2, \qquad v \in \dom(a). 
  \end{equation}
  Set 
  \[
  \cV = \dom(a), 
  \qquad
  \| v \|_\cV = a(v,v)^{1/2}.
  \]
  To show that Assumption~\ref{asm:Vshort} holds in this case, first
  set the operator $A$ to be the closed selfadjoint {\em operator
    associated with the form}, namely it satisfies $a(u,v) = (Au,v)$
  for all $u \in \dom(A) \subseteq \dom(a)$ and all $v \in \dom(a)$
  (see the first representation theorem \cite[TheoremVI.2.1]{Kato1995}
  or \cite[Theorem~10.7]{Schmu12}).  Note that, in this case, $A$ is a
  positive operator.  Hence $A$ has a unique selfadjoint positive
  square root \cite[Theorem~V.3.35]{Kato1995}, denoted by $A^{1/2}$,
  that commutes with any bounded operator that commutes with $A$.  By
  the second representation theorem~\cite[Theorem VI.2.23]{Kato1995},
  the form domain is characterized by $\dom(a) = \dom(A^{1/2})$, and
  $\|v\|_\cV=\|A^{1/2}v\|_\cH$ for $v\in\cV$.  The strict positivity
  of $a$ ensures that both $A$ and $A^{1/2}$ are invertible on their
  respective domains.  

  Since $a$ is closed, $\cV$ is complete.  Due to~\eqref{eq:3}, $\cV$
  is continuously embedded in $\cH$, with the constant
  $C_\cV = \delta^{-1/2}$.  The exact eigenspace $E$ is contained in
  $\dom(A) \subseteq \dom(A^{1/2}) = \cV$.  Since $A^{1/2}$ and
  $A^{-1/2}$ commutes with $R(z)$, for any $v, w \in \cV$, we have for
  any $v\in\cV=\dom(A^{1/2})$ and $z$ in the resolvent set of $A$,
\begin{align*}
  R(z)v&=(z-A)^{-1}v=A^{-1/2}(z-A)^{-1}A^{1/2}v~.
\end{align*}
Since $\ran(A^{-1/2})=\dom(A^{1/2})=\cV$, we see that
$R(z)\cV\subseteq\cV$.  Thus
Assumption~\ref{asm:Vshort} is verified.~\hfill$\Box$
\end{example}

\begin{example}[$\cV$ is a graph space]
    \label{eg:case:C}
    Given $A$, put $\cV=\dom (A) \subseteq \cH$ and endow the set $\cV$
    with the topology of the graph norm
    \[
    \| v \|_{\cV} = 
    \left( 
      \| v \|_{\cH}^2 + \| \cA v \|_{\cH}^2
    \right)^{1/2}, \qquad  v \in \cV.
    \]
    We claim that Assumption~\ref{asm:Vshort} holds in this case.
    Indeed, since $A$ is closed, the graph norm makes $\cV$ into a
    Hilbert space.  Obviously $E\subset \cV$ and $\cV$ is continuously
    embedded into $\cH$ with $C_\cV=1$. Since $A$ commutes with $R(z)$
    for any $z$ in the resolvent set of $A$, we have
    $ R(z) \dom(A) \subseteq \dom(A)$.~\hfill$\Box$
\end{example}

The next essential ingredient in our study is the approximation of
$R(z)$.  When $\cA$ is a differential operator on an
infinite-dimensional space, to obtain numerical spectral
approximations, we perform a discretization to approximate the
resolvent of $\cA$ in a computable finite-dimensional
space. Accordingly, let $\cV_h$ be a finite-dimensional subspace of
$\cV$, where $h$ is a parameter inversely related to the finite
dimension, e.g., a mesh size parameter $h$ that goes to $0$ as the
dimension increases. Let $R_h(z):\cH \to\cV_h$ be a finite-rank
approximation to the resolvent $R(z)$ satisfying the following
assumption.
\begin{assumption}
  \label{asm:Rlim}
  Assume that the operators $R_h(z_k)$ and $R(z_k)$ are bounded in
  $\cV$ and satisfy
  \begin{equation}
    \label{eq:Rh-R}
    \lim_{h \to 0} \| R_h(z_k) - R(z_k) \|_{\cV} = 0
  \end{equation}  
  for all $k=0, 1,\ldots, N-1$.
\end{assumption} 

{\revv{Note that this assumption implies that $R(z_k)$, being the
    limit of finite-rank operators, is compact in $\cV$. Its also
    compact as an operator on $\cH$ due to
    Assumption~\ref{asm:Vshort}.  Consequently $R(z)$ is compact for
    all $z$ in the resolvent set. Relaxing Assumption~\ref{asm:Rlim}
    to go beyond operators with compact resolvent is outside the scope
    of the current work.}}

Consider the approximation of $S_N$ given by
\begin{align}\label{eq:SNh}
S^h_N&=w_N + \sum_{k=0}^{N-1}w_k R_h(z_k).
\end{align}
In view of Assumption~\ref{asm:Rlim}, we shall from now on view both
$S_N$ and $S_N^h$ as bounded operators on~$\cV$. Note that
\revv{$S_N^h$ need not be  selfadjoint}. In
Section~\ref{sec:fem}, we shall consider an example of $S_N^h,$
obtained by a standard finite element discretization of $R(z)$
\revv{based on symmetrically located $z_k$, that
is selfadjoint. But in general $S_N^h$ may fail to be selfadjoint
due to the configuration of $\{z_k\}$ or due to
the properties of the discretization (see e.g.,~\cite{feast_dpg}).}

With the resolvent discretization, filtered subspace iteration can be
described mathematically in very simple terms. Namely, starting with a
subspace $\Eh 0 \subseteq \cV_h$, compute
\begin{equation}
  \label{eq:1}
  \Eh\ell = S^h_N \Eh {\ell -1}, 
  \qquad 
  \text{ for } 
  \ell=1,2,\ldots. 
\end{equation}
Of course, in practice, one must include (implicit or explicit)
normalization steps and maintain a basis for the spaces
$\smash{\Eh \ell}$, but these details are immaterial in our ensuing
analysis.  The convergence of the FEAST algorithm in Euclidean
($\ell^2$ and matrix-based) norms was previously studied
in~\cite{GuttePolizTang15,Saad16}. In Section~\ref{sec:limit}, we
shall utilize some of their ideas to show that~\eqref{eq:1} converges in
$\cV$, despite the perturbations caused by the above-mentioned
resolvent approximations. Further studies on iterative speed of
convergence of~\eqref{eq:1} may be found in~\cite{feast_infinite}.
Here however, we are solely interested in studying the {\em
  discretization errors found in the final asymptotic product of the algorithm},
i.e., the discretization errors in what the algorithm outputs as the
``limit space'' when~\eqref{eq:1} converges.


\section{The limit space}
\label{sec:limit}

The purpose of this section is to identify to what space convergence
of~\eqref{eq:1} might happen.  We also briefly examine in what
sense $\smash{\Eh\ell}$ converges to it.

In view of Assumption~\ref{asm:Vshort}, $\cV$ is an invariant subspace
of the resolvent. Hence in the remainder of the paper, we will proceed
{\em viewing $S_N$ and $S_N^h$ as operators on $\cV$}.  To measure the
distance between two linear subspaces $M$ and $L$ of $\cV$, we use the
standard notion of gap \cite{Kato1995} defined by
\begin{equation}
  \label{eq:gapV}
\gap_\cV( M, L ) = \max \left[ \sup_{m \in U_M^\cV} \dist{\cV} ( m, L), \;
  \sup_{l \in U_L^\cV} \dist{\cV} ( l, M) \right].
\end{equation}
Here and throughout, for any linear subspace $M\subseteq \cV$, we use
$U_M^\cV$ to denote its unit ball $ \{ w \in M: \; \| w\|_\cV = 1\}$.

Recall that $E = \ran S$, the exact eigenspace corresponding to
eigenvalues $\lambda_1, \ldots, \lambda_m$ of $\cA$ that we wish to
approximate. If Assumption~\ref{asm:rN} holds, then 
the  operator $S_N = r_N(A)$ 
has  dominant eigenvalues 
\[
\mu_i = r_N(\lambda_i), \qquad i=1,2, \ldots, m,
\]
strictly separated in absolute value from the remainder of
$\Sigma(S_N)$.  In particular, since $\hat \kappa <1$, we have 
$\mu_i \ne 0$ for $i\leq m$, and letting $\mu_* =
\sup\{|\mu|: \mu \in \Sigma(S_N) \setminus \{\mu_1, \ldots \mu_m\}\}$,
\begin{equation}
  \label{eq:11}
  \mu_* < |\mu_i|, \quad i=1,2,\ldots, m.
\end{equation}
In view of these facts, we can find a simple rectifiable curve $\Theta$ in
the complex plane that encloses $\{ \mu_1, \ldots, \mu_m\}$ and lies
strictly outside the circle of radius $\mu_*$. 
In particular, $\Theta$ encloses no other element of
$\Sigma(S_N)$.  Define the spectral projector of $S_N$ by
\[
P_N = \frac{1}{ 2 \pi \ii} \oint_{\Theta} ( z - S_N)^{-1} \, dz.
\]
Then $E_N = \ran P_N$ is the eigenspace of $S_N$ corresponding to its
eigenvalues $ \mu_1, \ldots, \mu_m$.

\begin{lemma}
  \label{lem:E=EN}  
  We have 
  $E_N=E$ and $P_N = S$. 
\end{lemma}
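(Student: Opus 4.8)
The plan is to exploit the functional calculus relating $S_N = r_N(\cA)$ to $\cA$, together with the spectral mapping theorem, to identify $E_N = \ran P_N$ with $E = \ran S$. First I would note that, because $\cA$ is selfadjoint with compact resolvent (by the discussion following Assumption~\ref{asm:Rlim}), its spectrum consists of isolated eigenvalues of finite multiplicity, and there is an orthonormal eigenbasis $\{\phi_j\}$ of $\cH$ with $\cA\phi_j = \sigma_j\phi_j$, $\sigma_j \in \Sigma(\cA)$. Applying $S_N$ to each eigenvector gives $S_N\phi_j = r_N(\sigma_j)\phi_j$, as already observed in the text, so the $\phi_j$ are also eigenvectors of $S_N$ with eigenvalues $r_N(\sigma_j)$. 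Thus $\Sigma(S_N) = \overline{\{r_N(\sigma_j) : \sigma_j \in \Sigma(\cA)\}}$, and by Assumption~\ref{asm:rN} the values $\mu_i = r_N(\lambda_i)$, $i = 1,\dots,m$, are precisely the elements of $\Sigma(S_N)$ lying strictly outside the disk of radius $\mu_*$, i.e.\ precisely the elements enclosed by $\Theta$.

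Next I would compute $P_N$ directly via this eigenbasis. For each $j$, the scalar contour integral $\frac{1}{2\pi\ii}\oint_\Theta (z - r_N(\sigma_j))^{-1}\,dz$ equals $1$ if $r_N(\sigma_j)$ is enclosed by $\Theta$ and $0$ otherwise; by the preceding paragraph this equals $1$ exactly when $\sigma_j \in \Lambda$. Since $(z - S_N)^{-1}\phi_j = (z - r_N(\sigma_j))^{-1}\phi_j$, termwise integration (justified by normal convergence of the eigenexpansion and boundedness of the resolvent on the compact curve $\Theta$) yields $P_N\phi_j = \phi_j$ if $\sigma_j \in \Lambda$ and $P_N\phi_j = 0$ otherwise. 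Hence $\ran P_N$ is exactly the span of the eigenvectors associated with $\Lambda$, which is $E$. The same computation applied to $S = r(\cA)$ using~\eqref{character}--\eqref{SpectralProjection} shows $S\phi_j = r(\sigma_j)\phi_j$ equals $\phi_j$ for $\sigma_j \in \Lambda$ and $0$ otherwise, so $P_N$ and $S$ agree on the orthonormal basis $\{\phi_j\}$ and are therefore equal as bounded operators; consequently $E_N = \ran P_N = \ran S = E$.

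One subtlety to handle carefully is that we are viewing $S_N$ and $P_N$ as operators on $\cV$ rather than $\cH$ (per the convention fixed before this lemma), whereas the eigenbasis argument is most naturally carried out in $\cH$. Since $E \subseteq \cV$ and $\cV$ is continuously embedded in $\cH$ with $\cV$ invariant under $R(z)$ (Assumption~\ref{asm:Vshort}), the operators $S$, $S_N$, $P_N$ restrict consistently, and the ranges $\ran S$, $\ran P_N$ are the same finite-dimensional subspace $E$ whether computed in $\cH$ or in $\cV$; I would make this compatibility explicit but it is routine. The main obstacle, and the only place requiring genuine care, is the justification of termwise integration of the resolvent $(z - S_N)^{-1}$ along $\Theta$ against the eigenexpansion: one needs the partial sums of $\sum_j \langle u, \phi_j\rangle_\cH (z - r_N(\sigma_j))^{-1}\phi_j$ to converge to $(z-S_N)^{-1}u$ uniformly in $z \in \Theta$, which follows from $\Theta$ being bounded away from $\Sigma(S_N)$ so that $\sup_{z\in\Theta}|z - r_N(\sigma_j)|^{-1}$ is uniformly bounded in $j$, but this should be stated.
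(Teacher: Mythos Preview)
Your argument is correct and complete, but it follows a more computational route than the paper. The paper's proof is a three-line dimension count: since $\dim E_N = m = \dim E$ (established just before the lemma from the construction of~$\Theta$), it suffices to show $E \subseteq E_N$; but any eigenvector $e_i$ of $A$ for $\lambda_i \in \Lambda$ satisfies $S_N e_i = r_N(\lambda_i)e_i = \mu_i e_i$ with $\mu_i$ enclosed by $\Theta$, so $e_i \in E_N$. The identity $P_N = S$ then follows because both are \emph{orthogonal} projectors in $\cH$ with the same range---$P_N$ being orthogonal since $S_N = r_N(A)$ is a normal operator. Your approach instead builds the full orthonormal eigenbasis of $A$ (invoking the compact-resolvent consequence of Assumption~\ref{asm:Rlim}) and evaluates $P_N$ and $S$ on each basis vector via scalar Cauchy integrals, concluding they agree vector by vector. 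This has the advantage that you never need to notice $P_N$ is orthogonal, and it makes the equality $P_N = S$ completely explicit; on the other hand, it imports the discrete spectral structure of $A$, which the paper's argument does not require, and it obliges you to carry out the termwise-integration justification you correctly flag. Both arguments deal with the $\cV$-versus-$\cH$ compatibility in the same way, by observing that the ranges in question are the same finite-dimensional subspace regardless of the ambient norm.
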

\begin{proof}
  Since $\dim E_N = \dim E = m$, it suffices to prove that
  $E \subseteq E_N$. If $e_i\in E$ is an eigenfunction of $A$ corresponding
  to the eigenvalue $\lambda_i\in\Lambda$, then
  $S_Ne_i=r_N(\lambda_i)e_i$, so $e_i\in E_N$.  Since $P_N$ and $S$
  are both orthogonal projectors and have the same range, they are the
  same operator.
\end{proof}

Next, observe that when  Assumption~\ref{asm:Rlim} is used after 
  subtracting the expression
for $S_N^h$ in~\eqref{eq:SNh} from that of $S_N$, we obtain 
\begin{equation}
  \label{eq:Shlim-V}
  \begin{aligned}
  \| S_N - S^h_N  \|_\cV
  & \le W
  \max_{k =0, \ldots, N-1} \| R_h(z_k) - R(z_k) \|_{\cV} \to 0
  \end{aligned}
\end{equation}
as $h$ goes to 0. Let us recall the standard ramifications of the
convergence of operators in norm given by~\eqref{eq:Shlim-V} (see
e.g.,~\cite[Theorem~IV.3.16]{Kato1995} or
\cite{AnselPalme68}). Namely, given an open disc enclosing an isolated
eigenvalue of $S_N$ of multiplicity~$\ell$, \eqref{eq:Shlim-V} implies
that for sufficiently small $h$, there are exactly $\ell$ eigenvalues
(counting multiplicities) of $S^h_N$ in the same disc. In particular,
this implies that, for sufficiently small $h$, the contour $\Theta$ is
in the resolvent set of $S^h_N$ and encloses exactly $m$ eigenvalues
(counting multiplicities) of~$S^h_N$, which we shall enumerate as
$ \mu_1^h, \mu_2^h, \ldots, \mu_m^h$. 
Hence, the integral
\[
P_h = \frac{1}{ 2 \pi \ii} 
\oint_\Theta \left(z - S^h_N\right)^{-1} \,dz
\]
is well defined. 

\begin{definition}  \label{def:Eh}
Let $E_h = \ran P_h$.    
\end{definition}

Clearly, $P_h$ is the spectral
projector of $S^h_N$ corresponding to the eigenvalues
$\mu_1^h, \mu_2^h, \ldots, \mu_m^h$. Hence,
\begin{equation}
  \label{eq:dimEh}
\dim E_h = m.  
\end{equation}
Note also that by construction of  $\Theta,$
\begin{equation}
  \label{eq:2}
  \mu^h_i \ne 0, \qquad i=1,2, \ldots, m.
\end{equation}
We now show that the above-defined $E_h$ is the limit space of subspace
iterates $E_h^{(\ell)}$.

\begin{theorem}
  \label{thm:iteratesgap}
  Starting with a subspace $\Eh 0 \subseteq \cV_h $ satisfying
  $\dim(\Eh 0)=\dim( P_h \Eh 0 ) =m$, we compute $\Eh\ell$ by~\eqref{eq:1}.
  Suppose Assumptions~\ref{asm:rN}--\ref{asm:Rlim} hold.  Then there
  is an $h_0>0$ such that, for all $h < h_0$,
  \[
  \lim_{\ell \to \infty} \gap_\cV( \Eh \ell, E_h) =0.
  \]
\end{theorem}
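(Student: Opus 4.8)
The plan is to run the classical subspace-iteration convergence argument for the (possibly non-selfadjoint, possibly infinite-rank) operator $S^h_N$, now regarded as a bounded operator on $\cV$, with one extra preliminary step: upgrading the \emph{exact} spectral separation enjoyed by $S_N$ to a separation for the \emph{perturbed} operator $S^h_N$. The main obstacle is precisely this preliminary step; once a genuine spectral gap for $S^h_N$ is in hand, the rest is deterministic "graph transform" bookkeeping that is insensitive to whether the complementary invariant subspace is finite or infinite dimensional. Throughout I would use the Riesz projector $P_h$ of Definition~\ref{def:Eh} to split $\cV = E_h \oplus F_h$ with $E_h = \ran P_h$, $F_h = \ran(I-P_h)$; since $P_h$ commutes with $S^h_N$, both summands are invariant, and I write $A_h = S^h_N|_{E_h}$, $B_h = S^h_N|_{F_h}$. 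By the spectral decomposition associated to a Riesz projector, $\Sigma(A_h) = \{\mu^h_1,\dots,\mu^h_m\}$ and $\Sigma(B_h) = \Sigma(S^h_N)\setminus\{\mu^h_1,\dots,\mu^h_m\}$.

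\emph{Step 1: a spectral gap for $S^h_N$.} I would combine the norm convergence~\eqref{eq:Shlim-V} with the stability of Riesz projectors already recalled above (the same property used to enumerate the $\mu^h_i$), which places each $\mu^h_i$ in an arbitrarily small disc about $\mu_i$ for small $h$, and with upper semicontinuity of the spectrum under norm convergence, which confines $\Sigma(S^h_N)\setminus\{\mu^h_1,\dots,\mu^h_m\}$ to an arbitrarily small neighborhood of $\Sigma(S_N)\setminus\{\mu_1,\dots,\mu_m\}$, the latter lying in the closed disc of radius $\mu_*$. Since $\mu_* < \min_i|\mu_i|$ by~\eqref{eq:11}, this yields an $h_0 > 0$ so that, for all $h < h_0$, the spectral radius $\rho_h$ of $B_h$ satisfies $\rho_h < \sigma_h := \min_{1\le i\le m}|\mu^h_i|$; moreover $A_h$ is invertible on $E_h$ by~\eqref{eq:2}, with the spectral radius of $A_h^{-1}$ equal to $\sigma_h^{-1}$. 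Fix such an $h < h_0$ henceforth.

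\emph{Step 2: graph transform of the iterates and its decay.} The hypothesis $\dim \Eh 0 = \dim(P_h\Eh 0) = m$ says exactly that $P_h|_{\Eh 0}$ is injective, i.e.\ $\Eh 0 \cap F_h = \{0\}$; hence there is a unique linear $T_0\colon E_h \to F_h$ with $\Eh 0 = \{e + T_0 e : e \in E_h\}$. Applying $S^h_N$ and using invariance of $E_h,F_h$ together with invertibility of $A_h$, an induction gives $\Eh\ell = \{e + T_\ell e : e \in E_h\}$ with $T_\ell = B_h^{\ell}\, T_0\, A_h^{-\ell}$; in particular $\dim \Eh\ell = m$ for all $\ell$. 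Then
\[
\|T_\ell\|_\cV \le \|B_h^{\ell}\|_\cV\, \|T_0\|_\cV\, \|A_h^{-\ell}\|_\cV,
\]
and the Gelfand spectral-radius formula gives $\limsup_{\ell\to\infty}\|T_\ell\|_\cV^{1/\ell} \le \rho_h/\sigma_h < 1$, so $\|T_\ell\|_\cV \to 0$ (geometrically, at the rate $\rho_h/\sigma_h$ familiar from the matrix case, cf.~\cite{GuttePolizTang15,Saad16}).

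\emph{Step 3: from $\|T_\ell\|_\cV \to 0$ to gap convergence.} For $\psi$ in the $\cV$-unit ball of $\Eh\ell$, write $\psi = e + T_\ell e$ with $e \in E_h$; then $e = P_h\psi$, so $\|e\|_\cV \le \|P_h\|_\cV$ and $\dist{\cV}(\psi, E_h) \le \|T_\ell e\|_\cV \le \|P_h\|_\cV\,\|T_\ell\|_\cV$. For $\chi$ in the $\cV$-unit ball of $E_h$, the vector $\chi + T_\ell\chi$ lies in $\Eh\ell$, so $\dist{\cV}(\chi, \Eh\ell) \le \|T_\ell\chi\|_\cV \le \|T_\ell\|_\cV$. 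Inserting these bounds into~\eqref{eq:gapV} yields $\gap_\cV(\Eh\ell, E_h) \le \max(1,\|P_h\|_\cV)\,\|T_\ell\|_\cV \to 0$ as $\ell\to\infty$, which is the claim. As noted, the only delicate point is Step~1: because $S^h_N$ need not be selfadjoint, the separation cannot be obtained from variational eigenvalue-perturbation bounds and must instead be read off from holomorphic (Riesz-projector) perturbation theory plus upper semicontinuity of the spectrum, both of which are furnished by the norm convergence~\eqref{eq:Shlim-V}.
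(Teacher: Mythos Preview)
Your proof is correct and takes a genuinely different route from the paper's. The paper argues eigenvector by eigenvector: for each eigenvector $v_i$ of $S_N^h|_{E_h}$ it builds an explicit approximant $q_i^{(\ell)}=(\mu_i^h)^{-\ell}(S_N^h)^\ell q_i^{(0)}\in E_h^{(\ell)}$, derives the identity $v_i-q_i^{(\ell)}=-(\mu_i^h)^{-\ell}\big[S_N^h(I-P_h)\big]^\ell(q_i^{(0)}-v_i)$, bounds this via Gelfand's formula applied to $S_N^h(I-P_h)$, and finally invokes \cite[Theorem~I.6.34]{Kato1995} to upgrade the resulting one-sided distance bound to a two-sided gap statement. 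Your graph-transform argument instead encodes the whole iterate at once as $E_h^{(\ell)}=\{e+T_\ell e:e\in E_h\}$ with $T_\ell=B_h^{\ell}T_0A_h^{-\ell}$, and reads off both halves of the gap directly from $\|T_\ell\|_\cV\to 0$. The gain on your side is robustness: you never need an eigenvector basis for $E_h$, so the argument goes through unchanged even if $S_N^h|_{E_h}$ has nontrivial Jordan structure (the paper tacitly assumes diagonalizability when it asserts that the $v_i$ form a basis for $E_h$, which is not guaranteed since $S_N^h$ need not be selfadjoint). The paper's route, on the other hand, stays closer to the classical matrix subspace-iteration analyses it cites and yields individual per-eigenvector contraction factors $\delta_i=(\mu_*^h+\varepsilon)/|\mu_i^h|$.
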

\begin{proof}
  {\em Step~1:} Recall from~\eqref{eq:dimEh} that $\dim(E_h) = m$ for
  sufficiently small~$h$.  Together with $P_h \Eh 0 \subseteq E_h$ and
  the assumption $\dim(P_h \Eh 0)=m$, this leads to the equality
  \begin{equation}
    \label{eq:PhEh=Eh}
    P_h \Eh 0 = E_h.    
  \end{equation}
  Thus
  \[
    P_h \Eh \ell = P_h (S^h_N)^\ell \Eh 0 = (S^h_N)^\ell P_h \Eh 0 =
    (S^h_N)^\ell E_h = E_h.
  \]
  In particular, this implies that
  $\dim( \Eh \ell) \ge \dim( P_h\Eh \ell) = \dim(E_h)$. Hence,
  \begin{equation}
    \label{eq:dimEhl}
    \dim( \Eh \ell) = \dim(E_h) = m, \qquad  \ell =0,1,2, \ldots.
  \end{equation}

  {\em Step~2:} Let $v_i$ be an eigenvector of $S_N^h$ corresponding
  to eigenvalue $\mu^h_i$.  We shall now find an approximant of $v_i$
  in $E_h^{(\ell)}$.  Due to~\eqref{eq:PhEh=Eh}, there is a $\qii 0$
  in $ \Eh 0$ such that $P_h \qii 0 = v_i$. Set
  \[
    \qii \ell = \left(\frac{1}{\mu^h_i}\right)^{\ell} (S_N^h)^\ell\, \qii 0.
  \]
  Clearly $\qii \ell$ is well defined due to~\eqref{eq:2} and is in
  $\Eh \ell$. Moreover,
  \begin{align*}
    v_i - \qii \ell 
    & = v_i - \mu_i^{-\ell} (S_N^h)^\ell \big[ P_h \qii 0 
      + (I - P_h) \qii 0\big]
      \\
    & =  - \mu_i^{-\ell} (S_N^h)^\ell (I - P_h) \qii 0,
  \end{align*}
  Since  $(I-P_h) \qii 0 = (I-P_h)^2 \qii 0 = (I-P_h) (\qii 0 - v_i)$,
  we conclude that
  \begin{equation}
    \label{eq:10}
    v_i - \qii \ell =  - \mu_i^{-\ell} (S_N^h)^\ell (I-P_h) (\qii 0 - v_i).
  \end{equation}

  {\em Step 3:} Since $S_N^h$ commutes with $P_h$,
  equation~\eqref{eq:10} implies
  \[
    \| v - \qii \ell \|_\cV 
    \le 
    \frac{1}{|\mu^h_i |^\ell}
    \left\| \big[S_N^h (I-P_h)\big]^\ell\right\|_\cV 
    \| v - \qii 0\|_\cV.      
  \]
  Let $\mu_*^h$ denote the supremum of $|\mu|$ over all $\mu$ in
  $\Sigma(S_N^h) \setminus \{\mu_1^h, \mu_2^h, \ldots, \mu_m^h\}$,
  i.e., $\mu_*^h$ is the spectral radius of $S_N^h(I - P_h)$, so 
  \[
    \mu_*^h = \lim_{\ell \to \infty} \| [ S_N^h (I - P_h) ]^\ell
    \|_\cV^{1/\ell}. 
  \]
  Hence, for any given $\veps>0$, there is an
  $\ell_0\ge 1$ such that
  $\| [ S_N^h (I - P_h) ]^\ell \|_\cV \le (\mu_h^* + \veps)^\ell$
  holds for all $\ell > \ell_0$ and consequently
  \begin{equation}
    \label{eq:7}
    \| v - \qii \ell \|_\cV 
    \le 
    \frac{ (\mu_*^h + \veps)^\ell }{|\mu^h_i|^\ell} \| v - \qii 0\|_\cV.
  \end{equation}

  {\em Step~4:} As already seen, a consequence of
  Assumptions~\ref{asm:rN} and~\ref{asm:Rlim}, is that by making $h$
  sufficiently small, we ensure that the eigenvalues
  $\mu_1^h, \mu_2^h,$ $\ldots,$ $\mu_m^h$ of $S^h_N$ are strictly
  separated in magnitude from the remaining eigenvalues --
  cf.~\eqref{eq:11}.  Hence we may choose an $\veps>0$ so small that
  $\delta_i = ( \mu_*^h + \veps)/ |\mu_i^h| <1.$ Then, with
  $\alpha_i = \| v_i - \qii 0\|_\cV,$ the estimate~\eqref{eq:7}
  implies
  \begin{equation}
    \label{eq:deltai}
    \| v_i - \qii \ell \|_\cV \le \alpha_i \delta_i^\ell, \qquad \ell
    > \ell_0.
  \end{equation}
  Note that $v_i,$ $i=1, \ldots, m$ form a basis for $E_h$.
  Hence, we may expand an arbitrary
  $v_h \in U^\cV_{E_h}$ in this basis and construct an approximation
  of $v_h$ using the same coefficients:
  \[ 
    v_h = \sum_{i=1}^m c_i v_i, \qquad q_\ell = \sum_{i=1}^m c_i
    q_i^{(\ell)}.
  \]
  Then, by~\eqref{eq:deltai}, 
  \begin{align}
    \label{eq:8}
    \dist {\cV} ( v_h, \Eh \ell) 
    \le \| v_h - q_\ell \|_{\cV} 
    \le \sum_{i=1}^m |c_i \alpha_i| \delta_i^\ell
    \le \alpha
    \left( \sum_{i=1}^m |c_i|^2\right)^{1/2}
    \left( \sum_{i=1}^m \delta_i^{2\ell}\right)^{1/2}.
  \end{align}
  where $\alpha = \max_i \alpha_i$.

  {\em Step~5:} Denote one of the two suprema in the definition of
  $\gap_\cV( E_h, \Eh \ell)$ by
  \[
  \delta_{h,\ell} = 
  \sup_{ v_h \in U^\cV_{E_h}} \dist {\cV} \left( v_h, \Eh \ell \right).
  \]
  Let $g$ denotes the minimal eigenvalue of the $m\times m$ Gram
  matrix of the $v_i$-basis (whose $(i,j)$th entry is
  $(v_i, v_j)_\cV$). Then
  $ g \sum_{i=1}^m |c_i|^2 \le \| v_h \|_\cV^2=1$. (Note that $g$ may
  depend on $h$, but is independent of $\ell$.)  Hence~\eqref{eq:8}
  implies
  $
  \delta_{h,\ell}^2
  \le (\alpha^2/g)  \sum_{i=1}^m \delta_i^{2\ell}
  $
  which converges to $0$ as $\ell \to \infty$ since $\delta_i <1$.

  In particular, for large enough $\ell$, we have
  $\delta_{h,\ell} <1$. Hence, by~\cite[Theorem~I.6.34]{Kato1995}
  there is a subspace $\Eht \ell \subseteq \Eh\ell$ such that
  $ \gap_\cV( E_h, \Eht\ell) = \delta_{h,\ell} <1.  $ Hence,
  $\dim(E_h) = \dim( \Eht \ell) =m$. But by~\eqref{eq:dimEhl}, the
  only subspace $\Eht\ell \subseteq \Eh\ell$ of dimension~$m$ is
  $\Eht\ell = \Eh\ell$. Thus, for sufficiently large $\ell$,
  \[
  \gap_\cV(E_h, \Eh\ell) = \delta_{h,\ell},
  \]
  and the proof is complete since $\delta_{h,\ell} \to 0$ as $\ell \to \infty$.
\end{proof}

To summarize this section, we have defined a space $E_h$
(in
Definition~\ref{def:Eh}) using $S_N^h$, but 
independently of the filtered subspace iteration~\eqref{eq:1},
and have shown (in
Theorem~\ref{thm:iteratesgap}) that under certain conditions the
iteration converges to it.  The convergence of FEAST iterations for
matrices (disregarding any discretization errors) were previously
studied in~\cite{GuttePolizTang15} when $\cH = \RRR^n$ and
$\| \cdot \|_{\cV}=\| \cdot\|_{\cH}$ using the theory of subspace
iterations~\cite{Saad16}.  In fact the identity obtained in Step~2 of
the above proof was motivated by a standard argument in the analysis
of subspace iterations~\cite{Saad16}. Our proof of
Theorem~\ref{thm:iteratesgap} gives a rigorous justification for the
intuition that if the discretization is good, then despite the errors
in the resolvent approximations, filtered subspace iteration should
converge for well-separated eigenvalue clusters.

\section{Discretization errors in eigenspace}
\label{sec:space}

In this section we study how the discrete eigenspace $E_h$ \revv{approaches}
the exact eigenspace $E$ as the discretization parameter $h$ goes to
$0$.

\begin{theorem}
  \label{thm:discrete}
  Suppose Assumptions~\ref{asm:rN}--\ref{asm:Rlim} hold.  Then there
  is a $C_N>0$ and an $h_0>0$ such that for all $h < h_0$,
  \begin{equation}
    \label{eq:gapEEh}
  \gap_\cV( E, E_h) \le C_N W \max_{k=1,\ldots, N} 
  \left\| \big[  R(z_k) - R_h(z_k) \big]
    \raisebox{-0.1em}{\ensuremath{\big|_E}} \right\|_\cV,       
  \end{equation}
  so, in particular, 
  \[
  \lim_{h\to 0} \gap_\cV (E, E_h) = 0.
  \]
\end{theorem}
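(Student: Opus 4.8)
The plan is to bound the one-sided gap $d := \sup_{e \in U_E^\cV}\dist{\cV}(e, E_h)$ by the right-hand side of~\eqref{eq:gapEEh}, and then upgrade to the full gap using $\dim E = \dim E_h = m$. Fix $e \in U_E^\cV$. Since $E = \ran P_N$ by Lemma~\ref{lem:E=EN} and $E_h = \ran P_h$, we have $P_N e = e$ and $P_h e \in E_h$, so $\dist{\cV}(e, E_h) \le \|(P_N - P_h)e\|_\cV$. Writing $P_N$ and $P_h$ as Dunford--Taylor integrals over the common contour $\Theta$ (legitimate for $h < h_0$ by the discussion following~\eqref{eq:Shlim-V}) and invoking the resolvent identity in the specific orientation
\[
(z - S_N)^{-1} - (z - S_N^h)^{-1} = (z - S_N^h)^{-1}\,(S_N - S_N^h)\,(z - S_N)^{-1},
\]
I obtain $(P_N - P_h)e = \tfrac{1}{2\pi\ii}\oint_\Theta (z - S_N^h)^{-1}(S_N - S_N^h)(z - S_N)^{-1}e\,dz$.

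The orientation above is the crucial choice: $E$ is a spectral subspace of $S_N$ and $z\in\Theta$ avoids $\mu_1,\dots,\mu_m$, so $(z-S_N)^{-1}e \in E$, whence $(S_N - S_N^h)(z-S_N)^{-1}e = \big[(S_N-S_N^h)|_E\big]\,(z-S_N)^{-1}e$ --- the perturbation is applied only to vectors of $E$, where the hypothesis controls it. From $S_N - S_N^h = \sum_k w_k\big(R(z_k)-R_h(z_k)\big)$ we get $\|(S_N-S_N^h)|_E\|_\cV \le W\eta_h$ with $\eta_h := \max_k \|[R(z_k)-R_h(z_k)]|_E\|_\cV$. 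For the two resolvent factors: $M_1 := \sup_{z\in\Theta}\|(z-S_N)^{-1}|_E\|_\cV$ and $M_2 := \sup_{z\in\Theta}\|(z-S_N)^{-1}\|_\cV$ are finite and independent of $h$, and a Neumann-series argument based on $\|S_N - S_N^h\|_\cV \to 0$ (cf.~\eqref{eq:Shlim-V}) furnishes $h_0>0$ with $\sup_{z\in\Theta}\|(z-S_N^h)^{-1}\|_\cV \le 2M_2$ for $h<h_0$. Estimating the contour integral with these bounds yields $\dist{\cV}(e,E_h) \le C_N W\eta_h$, where $C_N$ depends only on $\len(\Theta)$, $M_1$, $M_2$ --- hence only on $N$ and the filter. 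Since $\eta_h \le \max_k\|R(z_k)-R_h(z_k)\|_\cV \to 0$ by Assumption~\ref{asm:Rlim}, after possibly shrinking $h_0$ we also have $d \le C_N W\eta_h < 1$.

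To upgrade: because $d<1$, \cite[Theorem~I.6.34]{Kato1995} produces a subspace $E' \subseteq E_h$ with $\gap_\cV(E,E') = d$ and $\dim E' = \dim E = m$; as $\dim E_h = m$ by~\eqref{eq:dimEh}, this forces $E' = E_h$, so $\gap_\cV(E,E_h) = d \le C_N W\eta_h$, which is~\eqref{eq:gapEEh}, and letting $h\to 0$ gives the stated limit. I expect the only delicate point to be the bookkeeping in the second paragraph: choosing the resolvent identity so that $S_N - S_N^h$ is only ever applied to elements of $E$ (so that the $|_E$-restricted norm, rather than the full $\cV$-operator norm, is what appears), and relying on the equal-dimension argument --- the same one already used in the proof of Theorem~\ref{thm:iteratesgap} --- so that the reverse one-sided gap never has to be estimated directly. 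Everything else is a routine contour estimate.
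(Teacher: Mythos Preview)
Your proof is correct and follows essentially the same route as the paper: bound the one-sided gap via $\|(P_N-P_h)e\|_\cV$, use the resolvent identity in the orientation that places $(z-S_N)^{-1}$ rightmost so the perturbation $S_N-S_N^h$ acts only on $E$, and then upgrade to the full gap via \cite[Theorem~I.6.34]{Kato1995} using $\dim E_h=m$. The only cosmetic differences are that you make the $h$-independence of $C_N$ explicit through a Neumann-series bound on $(z-S_N^h)^{-1}$, whereas the paper simply notes the norm convergence $\|(z-S_N^h)^{-1}\|_\cV\to\|(z-S_N)^{-1}\|_\cV$.
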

\begin{proof}
  Consider one of the two suprema in the definition of
  $\gap_\cV(E_N,E_h)$, namely
  \begin{equation}
    \label{eq:deltaNh}
    \delta_h = \sup_{e \in U^\cV_{E_N}} \dist\cV ( e, E_h).
  \end{equation}
  Then,
  \begin{align}
    \label{eq:deltah}
    \delta_h
    & \le 
      \sup_{e \in U^\cV_{E_N}}  \| e - P_h e \|_\cV
      \le 
      \sup_{e \in U^\cV_{E_N}}  \| (P_N  - P_h) e \|_\cV.
  \end{align}
  Note that 
  \begin{align*}
    P_N - P_h 
    & = \frac{1}{2\pi \ii}    \oint_\Theta
    \left[ 
    ( z - S_N)^{-1} - (z- S^h_N)^{-1} \right] \, dz
    \\
    & 
    = \frac{1}{2\pi \ii}
    \oint_\Theta
    (z - S^h_N)^{-1} (S_N - S^h_N)  (z - S_N)^{-1} \, dz.
  \end{align*}
  Since $E_N$ is an invariant subspace of $ (z - S_N)^{-1}$, the above
  identity gives the estimate
  \begin{align*}
    \| P_N e- P_he \|_\cV
    & \le 
      \left[ 
      \frac{1}{2\pi }
      \oint_\Theta 
      \| (z - S_N)^{-1} \|_\cV \| (z - S^h_N)^{-1} \|_\cV \, dz
      \right]
      \| (S_N - S^h_N)|_{E_N} \|_{\cV} \| e\|_\cV.
  \end{align*}
  Returning to~\eqref{eq:deltah}, we conclude that
  $\delta_h \le C_N \| (S_N - S^h_N) |_{E_N} \|_{\cV}$, where $C_N$ is a
  bound for the quantity in square brackets above. Clearly, $C_N$ can
  be bounded independently of $h$, since
  $\| (z - S_N^h)^{-1} \|_\cV \to \| (z - S_N)^{-1} \|_\cV.$

  Thus, by virtue of~\eqref{eq:Shlim-V}, $\delta_h \to 0$ as $h \to 0.$
  In particular, for sufficiently small $h$, we have $\delta_h
  <1$. Then, by~\cite[Theorem~I.6.34]{Kato1995}, there is a closed
  subspace $\tilde E_h \subseteq E_h$ such that
  $\gap_\cV(E_N, \tilde E_h) = \delta_h <1$ and
  $\dim \tilde E_h = \dim E_N = m$. Because of~\eqref{eq:dimEh}, this
  implies that $\tilde E_h = E_h$.  Since $E_N = E$ by
  Lemma~\ref{lem:E=EN}, we finish the proof of~\eqref{eq:gapEEh} by
  noting that
  $ \gap_{\cV} ( E, E_h) = \gap_\cV( E_N, \tilde E_h) = \delta_h$.
\end{proof}

\begin{remark} \label{rem:gapH}
  If $\cV$ is replaced by $\cH$ in~\eqref{eq:gapV}, we obtain
  $\gap_{\cH}(M,L)$, so 
  \[
  \gap_\cH( E, E_h ) = \max \left[
    \sup_{e \in U_E^\cH} \dist{\cH} ( e, E_h),
    \sup_{m \in U_{E_h}^\cH} \dist{\cH} ( m, E)
  \right].
  \]
  Its natural to ask if $\gap_\cV( E, E_h) \to 0$ implies
  $\gap_\cH(E, E_h)\to 0$ as $h \to 0$.  Let $\delta^\cH_h $ denote
  the first of the two suprema above.
  Since $E$ is
  finite-dimensional, there is a $C_m>0$ such that
  $ \| e \|_{\cH} \ge C_m\| e \|_{\cV}$ {for all $e$ in $E$.}
  Using also  Assumption~\ref{asm:Vshort},
  \[
  \delta^\cH_h = \sup_{0 \ne e \in E} \frac{\dist\cH( e, E_h) }{ \| e \|_{\cH}}
  \le 
  \,\frac{C_\cV}{C_m}\, 
  {\sup_{0 \ne e \in E}}\frac{ \dist\cV( e, E_h) }{  \| e \|_{\cV}}
  \le 
  \,\frac{C_\cV}{C_m}\, \gap_\cV(E,E_h).
  \]
  Thus, if $\gap_\cV( E, E_h) \to 0$, taking $h$ sufficiently small,
  $\dim(E_h) = \dim(E) = m$ and $\delta^\cH_h <1$, so
  using~\cite[Theorem~I.6.34]{Kato1995} as in the previous proof, we
  may conclude that $\gap_\cH (E, E_h) = \delta_h^\cH$.  This implies
  that, under the same assumptions as in Theorem~\ref{thm:discrete},
  there is an $h_1>0$ such that
  \begin{equation}
    \label{eq:gapHgapV}
    \gap_\cH(E, E_h) \le 
    \,\frac{C_\cV}{C_m}\, \gap_{\cV}(E, E_h)
  \end{equation}
  for all $h < h_1$.
  {Note $C_m$ depends only on $E$ and  is independent of $h$.}
\end{remark}

\section{Discretization errors in eigenvalues}
\label{sec:ew}

In this section, we analyze the eigenvalue approximations that are
generated as Ritz values (defined below) of eigenspace approximations
obtained from the filtered subspace iteration. To define the Ritz values
maintaining the same level of generality as we have so far, we need to
consider the (possibly unbounded) sesquilinear form generated by~$A$.

Recall that any selfadjoint operator $A$ admits the polar
decomposition $A = U_A |A| = |A| U_A$ (see~\cite[p.~335]{Kato1995}),
where $U_A$ is selfadjoint and partially isometric, and $|A|$ is
selfadjoint and positive semidefinite.  As described
in~\cite[\S10.2]{Schmu12}, the polar decomposition can be used to
define the following symmetric sesquilinear {\em
  form associated to the operator} $A$:
\begin{equation}
  \label{eq:6}
  a(x,y) = (U_A|A|^{1/2}x, |A|^{1/2}y)_\cH  
\end{equation}
for any $x,y$ in $\dom(a) = \dom(|A|^{1/2})$. 
Let
$| u |_a= |a(u,u)|^{1/2}$ for any $u \in \dom(a)$.
By the properties of $U_A$, 
\begin{equation}\label{ineq||}
| u |_a \leq (|A|^{1/2}u,|A|^{1/2}u)^{1/2}=\|
|A|^{1/2}u\|_{\mathcal{H}},\qquad u\in \dom(a).
\end{equation}

Let $F \subset \dom(a)$ be a closed finite-dimensional subspace of
$\cH$. We define $A_F:F\to F$ by the relation 
\begin{equation}
  \label{eq:12}
  (A_F x,y)_\cH =a(x,y) \qquad \text{ for all } x, y \in F.
\end{equation}
The spectrum of the linear operator $A_F$ on $F$, namely
$\Sigma(A_F)$, is called the set of {\em Ritz values of $A$ on $F$}.
The operator $A_E$ is defined by~\eqref{eq:12} with $E$ in place of
$F$.  Note that the exact eigenspace we wish to approximate, namely
$E,$ is contained in $ \dom(A) \subset \dom(a)$, and the exact
eigenvalue cluster $\Lambda$ that we wish to approximate is the set of
Ritz values of $A$ on~$E$.

Central to the discussion of this section is how the Ritz values
change when $F$ is a perturbation of $E$. To formulate a result on
sensitivity of Ritz values, we need more notation.  Recall that $S$ is
the $\cH$-orthogonal projection onto~$E$.  Let $Q$ denote the
$\cH$-orthogonal projection onto $F$.  Using $S$, we may express $A_E$
as
\[
A_E=S|A|^{1/2}U_A|A|^{1/2}S\big|_E
\]
and $A_F$ may be similarly expressed using $Q$. Let 
\begin{equation}
  \label{eq:14-F}
| (S -I)Q |_{a, F} = \sup_{0 \ne v \in F} 
\frac{ |(S - I)Q v |_a}{ \| v \|_\cH}
=
\sup_{0 \ne v \in F } 
\frac{ |(S - I) v |_a}{ \| v \|_\cH}.  
\end{equation}
Note that there is a finite positive constant
$\|A_E\|$ such that $\| A_E e \|_\cH \le \| A_E \| \| e\|_\cH$ for all
$e \in E$ (since $A_E: E \to E$ and $E$ is finite-dimensional).
Define the Hausdorff distance between two subsets $\Upsilon_1, \Upsilon_2
\subset \RRR$ by
\[
\hdist( \Upsilon_1, \Upsilon_2) = 
\max 
\left[
\sup_{\mu_1 \in \Upsilon_1} \hdist( \mu_1, \Upsilon_2), 
\sup_{\mu_2 \in \Upsilon_2} \hdist( \mu_2, \Upsilon_1)
\right]
\]
where $\hdist(\mu, \Upsilon) = \inf_{\nu \in \Upsilon} | \mu - \nu|$ for any 
$\Upsilon \subset \R.$  \revv{The following lemma is a perturbation
  result that can be understood  independently of the filtered
  subspace iteration. In particular, we have no need for 
  Assumptions~\ref{asm:rN}--\ref{asm:Rlim} in the lemma.}

\begin{lemma}
  \label{lem:Ritz}
  Suppose $\gap_\cH(E, F) < 1$. Then there is a $C_0 >0$ such that 
  \[
  \hdist( \Sigma(A_E), \Sigma(A_F)) 
  \le 
  | (S-I) Q |_{a, F}^2 + 
  C_0 \| A_E \| \, \gap_\cH(E, F)^2.
  \]
\end{lemma}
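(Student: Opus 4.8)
The plan is to bound the Hausdorff distance by passing through an auxiliary bounded selfadjoint operator for which the two error contributions decouple. Put $\eta = \gap_\cH(E,F)$, and note that $\eta < 1$ forces $\dim F = \dim E = m$ and makes $S|_F \colon F \to E$ and $Q|_E \colon E \to F$ bijective (since $\|(I-Q)e\|_\cH < \|e\|_\cH$ on $E$ and $\|(I-S)v\|_\cH < \|v\|_\cH$ on $F$). Introduce $G = S A_E S$, a bounded selfadjoint operator on $\cH$ with $\ran G \subseteq E$, $G|_E = A_E$, and $\|G\|_\cH \le \|A_E\|$; and define $B_F \colon F \to F$ to be the compression $B_F = Q G Q|_F$, which is $\cH$-selfadjoint on $F$. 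Since $\hdist$ restricted to finite subsets of $\RRR$ obeys the triangle inequality, it suffices to show $\hdist(\Sigma(A_F),\Sigma(B_F)) \le |(S-I)Q|_{a,F}^2$ and $\hdist(\Sigma(B_F),\Sigma(A_E)) \le C_0\|A_E\|\eta^2$.

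For the first estimate, both $A_F$ and $B_F$ are $\cH$-selfadjoint on $F$, so $\| A_F - B_F \|$ equals $\sup_{v \in U_F^\cH} |((A_F - B_F)v, v)_\cH|$. Decomposing $v = Sv + (I-S)v$ and using $(A_F v,v)_\cH = a(v,v)$, $(B_F v,v)_\cH = a(Sv,Sv)$, together with the cancellation $a(Sv,(I-S)v) = (A(Sv),(I-S)v)_\cH = 0$ — valid because $Sv \in E \subseteq \dom(A)$, $A(Sv) \in E$, and $(I-S)v \perp E$ — one finds $((A_F - B_F)v,v)_\cH = a((I-S)v,(I-S)v)$. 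Hence $\| A_F - B_F\| = \sup_{v \in U_F^\cH} |(I-S)v|_a^2 = |(S-I)Q|_{a,F}^2$, and Weyl's eigenvalue perturbation inequality on $F$ yields $\hdist(\Sigma(A_F),\Sigma(B_F)) \le |(S-I)Q|_{a,F}^2$.

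For the second estimate I would use the min-max principle. Writing $\mu_1 \ge \mu_2 \ge \cdots$ for eigenvalues in decreasing order, both $\mu_k(B_F)$ and $\mu_k(A_E)$ equal $\max \min_{0 \ne v \in V} (Gv,v)_\cH / \| v\|_\cH^2$, where $V$ ranges over $k$-dimensional subspaces of $F$, respectively of $E$ — here one uses $(B_F v,v)_\cH = (Gv,v)_\cH$ for $v \in F$ and $(A_E v,v)_\cH = (Gv,v)_\cH$ for $v \in E$. Transporting a $k$-dimensional $V \subseteq E$ to $QV \subseteq F$ (and $V' \subseteq F$ to $SV' \subseteq E$), I claim the Rayleigh quotient of $G$ changes by at most $C_0 \|A_E\| \eta^2$: the denominator changes only by the factor $1 + O(\eta^2)$, since $\|(I-Q)v\|_\cH \le \eta \|v\|_\cH$ on $E$ and $\|(I-S)v'\|_\cH \le \eta\|v'\|_\cH$ on $F$; and the numerator changes quadratically because $\ran G \subseteq E$ — for $v' \in F$ one has $((I-S)v', Gv')_\cH = 0$ as $(I-S)v' \in E^\perp$, while for $v \in E$ one has $((I-Q)v, Gv)_\cH = ((I-Q)v,(I-Q)Gv)_\cH$, of size $O(\eta^2 \|A_E\|)$ since $(I-Q)v \perp QGv$ and $Gv \in E$. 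It follows that $|\mu_k(B_F) - \mu_k(A_E)| \le C_0\|A_E\|\eta^2$ for all $k$, hence the claimed Hausdorff bound. Adding the two estimates gives the lemma.

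The main obstacle is the first estimate: recognizing that $A_F - B_F$, as a quadratic form on $F$, is exactly $v \mapsto a((I-S)v,(I-S)v)$, whose supremum over $U_F^\cH$ is precisely $|(S-I)Q|_{a,F}^2$. This rests on the cancellation of the cross term $a(Sv,(I-S)v)$, which in turn uses that $E$ is an exact $A$-invariant subspace lying inside $\dom(A)$ (so that $A(Sv)$ is defined and stays in $E$), and it demands careful bookkeeping of the sesquilinearity conventions for the form $a$ built from the polar decomposition of $A$. The second estimate is routine once one works with the bounded operator $G$, which is exactly what lets the argument avoid any semiboundedness assumption on $A$.
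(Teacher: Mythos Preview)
Your argument is correct and takes a genuinely different route from the paper's.  The paper transports $A_E$ to $F$ via Kato's direct rotation $J=(I-R)^{-1/2}[QS+(I-Q)(I-S)]$, so that $\Sigma(A_E)=\Sigma(JA_EJ^*|_F)$, and then bounds the single difference $JA_EJ^*|_F-A_F$ by expanding $(Df,f)=a(SJ^*f,SJ^*f)-a(f,f)$ and splitting $SJ^*f-Qf$ into a piece of size $O(\|R\|)$ and the piece $(S-I)Qf$.  You instead insert the intermediate compression $B_F=Q(SA_ES)Q|_F$ and use the triangle inequality for $\hdist$: the comparison $A_F$ versus $B_F$ isolates the $|(S-I)Q|_{a,F}^2$ term exactly, via the same cross-term cancellation $a(Sv,(I-S)v)=(A Sv,(I-S)v)_\cH=0$ that the paper exploits in its Step~4; the comparison $B_F$ versus $A_E$ is then handled by min--max for the bounded selfadjoint $G$, avoiding the unitary altogether.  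Your route is a bit more elementary and makes the two error contributions visibly decouple; the paper's route has the advantage that it does not pass through min--max and so extends verbatim to infinite-dimensional $E,F$ (as the paper remarks), whereas your proof as written uses $\dim E=\dim F=m<\infty$ to index eigenvalues.  Both yield a $C_0$ that depends on an upper bound $\delta<1$ for $\gap_\cH(E,F)$ through factors like $(1-\delta^2)^{-1}$, which is consistent with the lemma's statement.
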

\begin{proof}
  {\em Step 1:} 
  Let  $R = (S -Q)^2$ and let 
   $\delta <1 $ be any number satisfying 
  $\gap_\cH(E, F) \le \delta <1.$ Since
  $\| R \|_\cH \le \gap_\cH( E, F)^2 \le \delta^2 < 1$, the binomial
  series  $ \sum_{n=0}^\infty (\begin{smallmatrix} -1/2 \\ n
  \end{smallmatrix}) (-R)^n $ converges and defines $
  (I-R)^{-1/2}$. Subtracting the first term from this series, define 
  $T = (I-R)^{-1/2} - I$. Since
  $(1-x)^{-1/2} -1 = x [\sqrt{1-x} + (1-x)]^{-1}$, we obtain that
\begin{align*}
\|T\|_\cH\leq \sum_{n=1}^\infty \binom{-1/2}{n}\|R\|_{\cH}^n =(1-\|R\|_{\cH})^{-1/2} -1 = \|R\|_{\cH} [\sqrt{1-\|R\|_{\cH}} + (1-\|R\|_{\cH})]^{-1}~,
\end{align*}
 which implies
  \begin{equation}
    \label{eq:Tbd}
    \| T\|_\cH \le \| R \|_\cH \left[ \sqrt{1- \delta^2} + 
      (1 - \delta^2) \right]^{-1}.
  \end{equation}
  We use $R$ to define an isometry
  $J = (I-R)^{-1/2} [ QS + (I-Q)(I-S)] $ on $\cH$
  (cf.~\cite[p.~33]{Kato1995}) which maps $E$ one-to-one onto $F$,
  and whose inverse is
  \begin{equation}
    \label{eq:Uinv}
    J^{-1} = J^* = \left[ SQ + (I-S)(I-Q) \right] (I-R)^{-1/2}.
  \end{equation}
  Note that the spectra of $A_E$ and the unitarily equivalent
  $ JA_E J^*|_{{F}}$ are identical.

  {\em Step 2:} Let $D = JA_E J^*|_{F} - A_{F},$ a selfadjoint operator
  on $F$.  By~\cite[Theorem~V.4.10]{Kato1995},
  \begin{equation}
    \label{eq:hdorf}
    \hdist( \Lambda, \Lambda_h) \le \| D|_{F} \|_\cH = 
    \sup_{0 \ne f \in F} 
    \frac{ | (Df,f)_{\cH} |}{(f,f)_\cH}.
  \end{equation}
  For $f\in F$, we have
\begin{align*}
(Df,f)_\cH &= a( J^* f, J^* f) - a(f,f) =
             a( S J^* f, SJ^* f) - a(Qf,Qf) 
  \\
           & = 
             \Re a (SJ^* f + Q f,  SJ^* f - Q f).
\end{align*}
 Observing that~\eqref{eq:Uinv} implies 
  $S J^* = SQ (I-R)^{-1/2}$,  we split
  \begin{align*}
    (Df,f)_\cH 
    & = 
      \Re a ( (SJ^*  + Q) f, SQ \left[ (I-R)^{-1/2} - I\right]f)
    \\
    & +
      \Re a ( (SJ^*  + Q) f, (S-I)Q    f).
  \end{align*}
  Labelling the two terms on the right as $t_1$ and $t_2,$ we proceed
  to estimate them.

  {\em Step 3:} The first term
  $t_1 = \Re a ( (SJ^* + Q) f, SQ T f) = \Re a ( (SJ^* + S Q) f,
  SQ T f)$.
  Here we have used $S^2 = S$ and $a(S x, y) = a(x, Sy)$ for all
  $x, y \in \dom(a)$. The latter follows from~\eqref{eq:6} because $S$
  commutes with $A$, so it commutes with $|A|$ and $U$
  \cite[p.335ff]{Kato1995}, and moreover, it commutes with $|A|^{1/2}$
  (see e.g.~\cite[Theorem~V.3.35]{Kato1995}). Continuing, 
  \begin{align*}
    |t_1|
    & =  |\Re a ( (SJ^*  + Q) f, SQ T f)| 
      = 
      |\Re (A_E S (J^* + Q) f, SQTf )_\cH|
      \\
    & \le
      \| A_E\| \| S (J^* + Q) f \|_\cH \| SQTf \|_\cH
     \le 
      \frac{ 2 \| A_E\| \|R\|_\cH }{ \sqrt{1- \delta^2} + 
      (1 - \delta^2) } \| f \|_\cH^2
  \end{align*}
  where we have used the fact that orthogonal projectors have unit
  norm as well as the isometry of $J^*$.  Thus
  \begin{align*}
    | t_1| 
    & \le  
      \frac{2 \|A_E\|}{ \sqrt{1- \delta^2} + 
      (1 - \delta^2) } \gap_\cH(E,F)^2 \| f \|_\cH^2.
  \end{align*}

  {\em Step 4:} Next, we estimate $t_2$. Since
  $ a((S-I)x, y) = a(x, (S-I)y)$ for all $x, y \in \dom(a)$,
  \begin{align*}
    |t_2| 
    & = | \Re a ( (S-I) (SJ^*  + Q) f, (S-I)Q    f)| 
    \\
    & = |\Re a ( (S-I) Q f, (S-I)Q    f)| 
    \\
    & \le | (S-I)Q |_{a, h}^2 \| f \|_\cH^2.
  \end{align*}
  Adding the estimates for $|t_1|$ and $|t_2|$ and using it
  in~\eqref{eq:hdorf}, the proof is finished.
\end{proof}

Before applying this lemma to filtered subspace iteration, a few
remarks are in order. 
{\em (i)}~Its clear from the proof that the result of the lemma holds
even when dimension of $E$ (and $F$) is infinite, as long as
$\| A_E\| < \infty.$ Its also clear from the proof that the constant
$C_0=2 /({\sqrt{1- \delta^2} + 1 - \delta^2})$ is independent of the
location of eigenvalue cluster $\Lambda$.
{\em (ii)} The quantity $| (S-I) Q|_{a, F}^2$ is related to the square of
  the gap (like the other term in the bound of Lemma~\ref{lem:Ritz}).
  Indeed, if $\caF$ is any constant that satisfies
  $ |v|_a^2 \le \caF \| v \|_\cH^2$ for all $v \in E + F,$ then
  \begin{equation}\label{H:estimate} 
    | (S-I) Q|_{a, F}^2 \le \caF \|
    (S-I)Q\|_\cH^2 \le \caF \gap_\cH(E, F)^2.
  \end{equation} 
  However, in applications, we usually need to make the dependence of
  $\caF$ more explicit (say, on discretization parameters).  One
  technique for this is developed in the proof of
  Corollary~\ref{EigenvalueError_Gaps|A|} below.
{\em (iii)}~We highlight that Lemma~\ref{lem:Ritz} applies to {\em general
  unbounded} selfadjoint operators, even those whose spectra 
  extends throughout the real line.
  {\em (iv)}~Bounds for the Hausdorff distance between Ritz values
  under space perturbations have been previously studied for {\em
    bounded} operators~\cite[Theorem~5.3]{Knyazev2} and a part of the
  above proof above is inspired by their arguments. However we are not
  able to use their result directly because it holds only for Ritz
  values located at the extremes (top or bottom) of the spectrum of
  the bounded operator. Nonetheless, an approach to bring
  \cite[Theorem~5.3]{Knyazev2} to bear on unbounded operators is to
  apply it to $R(\mu) = (\mu - A)^{-1}$, which is bounded (even if $A$
  is unbounded) provided $\mu$ is in the resolvent set of $A$.  To
  quickly sketch this approach, one choses a $\mu$ such that $E$
  becomes the eigenspace of $R(\mu)$ corresponding to the top of its
  spectrum, then apply \cite[Theorem~5.3]{Knyazev2} to obtain an
  estimate that bounds the distance between Ritz values of $R(\mu)$,
  from which one then concludes estimates on  the distance between
  Ritz values of $A$ on $E$ and on $F$. This technique would yield bounds
  involving $\gap_\cH(E, F)^2$ like that of Lemma~\ref{lem:Ritz} but
  with other $\mu$-dependent constants.
  {\em (v)} For finite-dimensional $E, F,$ perturbations in
  eigenvalues of bounded operators corresponding to the top or bottom
  of the spectrum have also been studied in
  \cite[Theorem~2.7]{KnyazevFEM} using majorization techniques. Their
  estimates can also be used to study spectral perturbations of
  unbounded operators by the technique mentioned in item {\em (iv)}
  above. In cases where one can bound specific angles between $E$ and
  $F$, then \cite[Theorem~2.7]{KnyazevFEM} may provide bounds for
  individual eigenvalue errors that are sharper than what can be
  concluded from bounds of the Hausdorff distance.

We now turn to the issue of approximating the eigenvalue cluster
$\Lambda$ 
using the subspaces of $\cV_h$ generated by the filtered subspace iteration
using $S_N^h.$ Our analysis of this approximation is under the next
assumption.  Example~\ref{eg:Neumann} below illustrates the reason to
consider forms and place this assumption.

\begin{assumption}
  \label{asm:sqrootmodA}
  Assume that $\cV_h$ is contained in $\dom(a)$.
\end{assumption}

\begin{example}[Positive operators]
  \label{eg:positiveform}
  Consider the operator $A$ and the form $a$ in
  Example~\ref{eg:case:B}. Here, since $A$ is positive, the factors of
  the polar
  decomposition of $A$ are $U_A = I$ and $|A|=A$. Thus
  $\dom(a) = \dom( |A|^{1/2} ) = \dom( A^{1/2})$. 
  Moreover, $\cV = \dom(A^{1/2})$ in Example~\ref{eg:case:B}.  Since
  $\cV_h \subset \cV$ by definition, we conclude that
  Assumption~\ref{asm:sqrootmodA} holds.~\hfill$\Box$
\end{example}

\begin{example}[A differential operator]
  \label{eg:Neumann}
  To give an example of a partial differential operator fitting the
  scenario of Example~\ref{eg:positiveform}, suppose $\om$ is an open
  subset of $\RRR^d,$ $\beta:\om \to \RRR$ is a bounded positive
  function, and $\alpha: \om \to \CCC^{d \times d}$ is a bounded
  Hermitian positive definite matrix function. Suppose the smallest
  eigenvalue of $\alpha(x)$ and $\beta(x)$ are greater than some
  $\delta>0$ for a.e.\ $x\in \om$. Put $\cH = L^2(\om)$ and set $a$ by
  \begin{equation}
    \label{eq:4}
  a(u,v) = \int_\om \alpha \grad u \cdot \grad \overline{v}  \; dx
  + \int_\om  \beta u \overline{v}  \; dx    
  \end{equation}
  for all $u,v$ in $\dom(a) = H^1(\om)$. This is a densely defined
  closed form. Set $A$ to be the closed selfadjoint {\em operator
    associated to the form} $a$, obtained by a representation
  theorem~\cite[Theorem~10.7]{Schmu12}. 

  When $\alpha$ and $\beta$ equal the identity and $\om$ has Lipschitz
  boundary, the operator $A$ is a Neumann operator whose domain
  satisfies $ \dom(A) \subseteq H^{3/2}(\om) $ by a result
  of~\cite{JerisKenig81}. Thus $\dom(A)$ is strictly smaller than
  $ \dom(a) = \dom(A^{1/2}) = H^1(\om)$ in this case.  Therefore, if
  $\cV_h$ is set to the Lagrange finite element subspace of
  $H^1(\om)$, then Assumption~\ref{asm:sqrootmodA} holds.  Note that
  it is easier to build finite element subspaces of $H^1(\om)$ than
  $H^{3/2}(\om)$, which is why we did {\em not} require $\cV_h$ to be
  contained in $\dom(A)$ in
  Assumption~\ref{asm:sqrootmodA}.~\hfill$\Box$
\end{example}

\begin{example}[Semibounded operators]
  \label{eg:semibounded}
  Suppose $A$ is {\em lower semibounded}, i.e., there is a
  $\mu \in \RRR$ such that $(A x,x)_\cH \ge \mu\,(x,x)_\cH$ for all
  $x$ in $\dom(A)$. Then, by \cite[Proposition~10.5]{Schmu12}, 
  \begin{equation}
    \label{eq:5}
    \dom(|A|^{1/2}) = \dom( (A - \mu)^{1/2} ).
  \end{equation}
  
  An example of such an operator is the operator associated to the
  form $a$ in~\eqref{eq:4} when $\beta$ no longer satisfies
  $\beta >0$, but instead changes sign while remaining bounded on
  $\om$. Then fixing some $\mu < -\|\beta\|_{L^\infty(\om)},$ we note
  that the operator $A - \mu$ is positive and is the operator
  associated with the positive form
  $a_\mu(u,v) = a(u,v) - \mu(u,v)_\cH$. Thus, by
  Example~\ref{eg:positiveform},
  $\dom(a_\mu) = \dom( (A-\mu)^{1/2}) = H^1(\om)$. Hence
  by~\eqref{eq:5} we conclude that $\dom(a) = H^1(\om)$.~\hfill$\Box$
\end{example}

\begin{remark}
  Above we have encountered two related, but distinct concepts, of the
  {\em form associated to an unbounded operator} (via the polar
  decomposition as in~\eqref{eq:6}) and the {\em operator associated
    to a unbounded form} (by the first representation theorem
  \cite[TheoremVI.2.1]{Kato1995}). If one begins with a form $a$ and
  then considers the operator $A$ associated to it, we can define
  another form $\tilde{a}$ that is the form associated to $A$. The
  form $\tilde{a}$ need not equal $a$ for a general selfadjoint
  operator as shown
  in~\cite[Example~2.11]{GrubiKostrMakar13}). However, $a$ and
  $\tilde a$ are equal if $a$ is a densely defined lower semibounded
  closed form by \cite[Theorem~10.7]{Schmu12}.
\end{remark}

With the above background in mind, we now return to the analysis of
eigenvalue approximations.  Recall
$E_h\subset \cV_h \subset \dom(|A|^{1/2})$, the space we studied in
Section~\ref{sec:limit}).  Using $E_h$, we compute the spectrum of the
finite-dimensional operator $A_{E_h}$,
\[
\Lambda_h = \Sigma(A_{E_h}).
\]
This set forms our approximation to $\Lambda$. 
In practice, the elements of $\Lambda_h$ 
are computed by solving a
small dense generalized eigenproblem arising from an equivalent equation
of forms: find $\lambda_h\in\RR$ and $0\neq u_h\in E_h$ satisfying
\[
a(u_h, v_h) = \lambda_h(u_h, v_h)_{\cH}
\]
for all $v_h \in E_h$. The collection of all such $\lambda_h$ forms
$\Lambda_h$.  In the next theorem, we use $Q_h$ to denote the
$\cH$-orthogonal projection onto $E_h$.

\begin{theorem}
  \label{thm:HausDistEWs}
  Suppose Assumptions~\ref{asm:rN}--\ref{asm:sqrootmodA} hold.  Then
  there are positive constants $C_0$ and $h_0$ such that for all
  $h < h_0$,
  \[
    \hdist( \Lambda, \Lambda_h) \le 
    | (S-I) Q_h |_{a, E_h}^2 + 
    C_0 \| A_E \| \, \gap_\cH(E, E_h)^2.
  \]
\end{theorem}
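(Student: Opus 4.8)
The plan is to obtain this theorem as a direct consequence of Lemma~\ref{lem:Ritz}, applied with $F = E_h$ (so that $Q = Q_h$), after verifying its hypothesis for all sufficiently small $h$. Three preliminary observations make this precise. First, by the discussion preceding this section, the exact cluster $\Lambda$ equals $\Sigma(A_E)$, the set of Ritz values of $A$ on $E$, whereas $\Lambda_h$ is by definition $\Sigma(A_{E_h})$; thus the left-hand side of the asserted inequality is exactly $\hdist(\Sigma(A_E), \Sigma(A_{E_h}))$. Second, $A_{E_h}$ is well defined: since $E_h = \ran P_h \subseteq \cV_h$ (as recorded in Section~\ref{sec:limit}) and Assumption~\ref{asm:sqrootmodA} gives $\cV_h \subseteq \dom(a)$, we have $E_h \subseteq \dom(a)$, so~\eqref{eq:12} with $E_h$ in place of $F$ defines $A_{E_h}$ on the finite-dimensional space $E_h$; likewise the quantity $|(S-I)Q_h|_{a, E_h}$ is meaningful. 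Third, $\dim E_h = m = \dim E$ by~\eqref{eq:dimEh}.

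Next I would verify the gap hypothesis $\gap_\cH(E, E_h) < 1$ for $h$ small. Since Assumptions~\ref{asm:rN}--\ref{asm:Rlim} are in force, Theorem~\ref{thm:discrete} gives $\gap_\cV(E, E_h) \to 0$ as $h \to 0$, and Remark~\ref{rem:gapH}, specifically~\eqref{eq:gapHgapV}, then yields, for $h$ below some threshold $h_1$, the bound $\gap_\cH(E, E_h) \le (C_\cV/C_m)\, \gap_\cV(E, E_h)$; since $C_\cV/C_m$ is independent of $h$, this forces $\gap_\cH(E, E_h) \to 0$. Hence there is an $h_0 \le h_1$ such that $\gap_\cH(E, E_h) \le 1/2$ for all $h < h_0$.

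Finally I would invoke Lemma~\ref{lem:Ritz} with $F = E_h$ and $Q = Q_h$. By the proof of that lemma, the constant produced has the form $C_0 = 2/(\sqrt{1-\delta^2} + 1 - \delta^2)$ for any $\delta$ with $\gap_\cH(E, E_h) \le \delta < 1$; choosing $\delta = 1/2$, which is permissible for $h < h_0$ by the previous paragraph, makes $C_0$ a fixed number independent of $h$. The lemma then gives, for all $h < h_0$,
\[
\hdist(\Sigma(A_E), \Sigma(A_{E_h})) \le |(S-I) Q_h|_{a, E_h}^2 + C_0\, \|A_E\| \, \gap_\cH(E, E_h)^2,
\]
which, using $\Sigma(A_E) = \Lambda$ and $\Sigma(A_{E_h}) = \Lambda_h$, is exactly the claimed estimate.

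There is no substantive obstacle here: the real content is already carried by Lemma~\ref{lem:Ritz} (the abstract Ritz-value perturbation bound) together with Theorem~\ref{thm:discrete} and Remark~\ref{rem:gapH} (convergence of the discrete eigenspace in the $\cH$-gap). The only points requiring care are bookkeeping: ensuring $C_0$ is uniform in $h$ (fix $\delta = 1/2$ once $h$ is small), taking $h_0$ to be the minimum of the threshold appearing in Remark~\ref{rem:gapH} and the one guaranteeing $\gap_\cH(E, E_h) \le 1/2$, and checking that $E_h \subseteq \dom(a)$ so that the Ritz operator $A_{E_h}$ is meaningful in the first place.
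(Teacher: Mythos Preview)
Your proposal is correct and follows essentially the same approach as the paper: reduce to Lemma~\ref{lem:Ritz} with $F=E_h$, after using Theorem~\ref{thm:discrete} and~\eqref{eq:gapHgapV} to guarantee $\gap_\cH(E,E_h)<1$ for small $h$. Your version is more explicit about the bookkeeping (well-definedness of $A_{E_h}$ via Assumption~\ref{asm:sqrootmodA}, fixing $\delta=1/2$ to make $C_0$ uniform), which the paper's two-line proof leaves implicit; one small correction is that the inclusion $E_h\subset\cV_h$ is asserted in the discussion just before the theorem in Section~\ref{sec:ew}, not in Section~\ref{sec:limit}.
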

\begin{proof}
  By Theorem~\ref{thm:discrete} and~\eqref{eq:gapHgapV} we may choose
  $h$ so small that $\gap_\cH(E, E_h) \le \delta <1.$ Hence, applying
  Lemma~\ref{lem:Ritz}, the result follows.
\end{proof}

\begin{corollary}\label{EigenvalueError_Gaps|A|}
  In addition to Assumptions~\ref{asm:rN}--\ref{asm:sqrootmodA},
  suppose $\|u\|_{\cV}=\||A|^{1/2} u\|_{\cH}$. Then 
  there are positive constants $C_0$ and $h_0$ such that for all
  $h < h_0$, \revv{we have $\gap_\cV(E, E_h) <1$ and}
  \[
  \hdist( \Lambda, \Lambda_h) \le (\lmax)^2 \gap_\cV(E, E_h)^2
  + 
  C_0 \| A_E \| \, \gap_\cH(E, E_h)^2\revv{,}
  \]
  where $\lmax = \sup_{e_h \in E_h} \| |A|^{1/2} e_h\|_{\cH} / \| e_h
  \|_\cH$ \revv{satisfies 
  \[
  (\lmax)^2\le \left(\frac{1}{1-\gap_\cV(E, E_h)}\right)^2  \| A_E\|.
  \]}
\end{corollary}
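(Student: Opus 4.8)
The plan is to deduce the estimate from Theorem~\ref{thm:HausDistEWs} by rewriting its two error terms. First, after possibly shrinking the threshold of Theorem~\ref{thm:HausDistEWs} and using $\lim_{h\to0}\gap_\cV(E,E_h)=0$ from Theorem~\ref{thm:discrete}, I may take $h_0$ so small that $\gap_\cV(E,E_h)<1$ for all $h<h_0$; Theorem~\ref{thm:HausDistEWs} then gives, with its constant $C_0$,
\[
  \hdist(\Lambda,\Lambda_h)\le|(S-I)Q_h|_{a,E_h}^2+C_0\|A_E\|\,\gap_\cH(E,E_h)^2 .
\]
It therefore suffices to establish the two inequalities
\[
  |(S-I)Q_h|_{a,E_h}\le\lmax\,\gap_\cV(E,E_h),
  \qquad
  (\lmax)^2\le(1-\gap_\cV(E,E_h))^{-2}\|A_E\| .
\]

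The key point is the identity $\|(I-S)v\|_\cV=\dist{\cV}(v,E)$ for $v\in\cV$. Observe first that the hypothesis $\|u\|_\cV=\||A|^{1/2}u\|_\cH$ forces $0\notin\Lambda$ (otherwise a nonzero eigenvector lying in $E\subseteq\cV$ would have vanishing $\cV$-norm), so $|A|^{1/2}$ restricts to a bijection of the finite-dimensional spectral subspace $E$ onto itself. Since $S$ commutes with $|A|^{1/2}$ (as recalled in the proof of Lemma~\ref{lem:Ritz}) and $I-S$ is an $\cH$-orthogonal projection,
\[
  \|(I-S)v\|_\cV=\||A|^{1/2}(I-S)v\|_\cH=\|(I-S)|A|^{1/2}v\|_\cH=\dist{\cH}(|A|^{1/2}v,E),
\]
and reparametrizing the infimum defining $\dist{\cH}(\cdot,E)$ through the bijection $|A|^{1/2}|_E$ identifies this with $\dist{\cV}(v,E)$. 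Then for $0\ne v\in E_h$, using~\eqref{ineq||}, this identity, the definition of $\gap_\cV$, and the definition of $\lmax$,
\[
  |(S-I)v|_a\le\||A|^{1/2}(S-I)v\|_\cH=\dist{\cV}(v,E)\le\gap_\cV(E,E_h)\,\|v\|_\cV\le\gap_\cV(E,E_h)\,\lmax\,\|v\|_\cH .
\]
Since $Q_hv=v$ on $E_h$, dividing by $\|v\|_\cH$ and taking the supremum over $E_h$ gives the first inequality; squaring it and substituting into the displayed bound proves the eigenvalue estimate.

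For the bound on $\lmax$, I would fix $0\ne e_h\in E_h$ and split $e_h=Se_h+(I-S)e_h$. The identity above gives $\|(I-S)e_h\|_\cV=\dist{\cV}(e_h,E)\le\gap_\cV(E,E_h)\|e_h\|_\cV$, hence
\[
  \|e_h\|_\cV\le\|Se_h\|_\cV+\gap_\cV(E,E_h)\|e_h\|_\cV
  \quad\Longrightarrow\quad
  \|e_h\|_\cV\le\frac{\|Se_h\|_\cV}{1-\gap_\cV(E,E_h)} .
\]
Because $Se_h\in E$ and $|A|$ agrees with $|A_E|$ on $E$ (both act as $|\lambda_i|$ on the $i$th eigenvector, since $A_Ee_i=\lambda_ie_i$), we get $\|Se_h\|_\cV^2=(|A_E|Se_h,Se_h)_\cH\le\|A_E\|\,\|Se_h\|_\cH^2\le\|A_E\|\,\|e_h\|_\cH^2$, the last step using that $S$ is an $\cH$-orthogonal projection. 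Combining, $\|e_h\|_\cV\le(1-\gap_\cV(E,E_h))^{-1}\|A_E\|^{1/2}\|e_h\|_\cH$; dividing by $\|e_h\|_\cH$, noting $\|e_h\|_\cV=\||A|^{1/2}e_h\|_\cH$ for $e_h\in E_h$, and taking the supremum over $E_h$ yields the second inequality.

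The only step that is not pure bookkeeping is the identity $\|(I-S)v\|_\cV=\dist{\cV}(v,E)$: one must notice that the norm hypothesis implicitly excludes $0$ from the cluster, so that $|A|^{1/2}$ is a bijection of $E$, which is precisely what lets a best-$\cH$-approximation statement on $E$ be transported into one for $\|\cdot\|_\cV$. Everything else is manipulation of orthogonal projections and the spectral decomposition of $A_E$ on the finite-dimensional space $E$; the remaining care is in routing the constant $C_0$ and threshold $h_0$ through Theorem~\ref{thm:HausDistEWs}, possibly shrinking $h_0$ once more to secure $\gap_\cV(E,E_h)<1$.
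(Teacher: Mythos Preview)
Your proof is correct and follows essentially the same route as the paper: both reduce to Theorem~\ref{thm:HausDistEWs}, establish $|(S-I)Q_h|_{a,E_h}\le\lmax\gap_\cV(E,E_h)$ via the identity $\|(I-S)v\|_\cV=\dist{\cV}(v,E)$, and then bound $\lmax$ by comparing $\|e_h\|_\cV$ with $\|Se_h\|_\cV$ and using $\|Se_h\|_\cV^2\le\|A_E\|\|e_h\|_\cH^2$. The one presentational difference is in justifying that identity: the paper shows directly that $(Su,v)_\cV=(u,Sv)_\cV$ from commutation of $S$ with $|A|^{1/2}$, so $S$ is the $\cV$-orthogonal projector onto $E$ and the identity is immediate; you instead pass through $\dist{\cH}(|A|^{1/2}v,E)$ and reparametrize the infimum using that $|A|^{1/2}|_E$ is bijective (which is why you need the observation $0\notin\Lambda$). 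Both arguments are valid, but the paper's is slightly more direct and avoids the detour through bijectivity.
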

\begin{proof}
 The first inequality 
 will follow from Theorem~\ref{thm:HausDistEWs} by establishing that
 \[ 
 |(S - I) Q |_{a, E_h} \le \lmax \gap_\cV(E, E_h).
 \]
 Since $S$ is a $\cH$-orthogonal projection, it is selfadjoint in $\cH$-inner
 product. Moreover, since $S$ commutes with $A$, it commutes with
 $|A|$ and hence with $|A|^{1/2}$. Therefore,
 \begin{align*}
   (S u, v)_\cV
   & = (|A|^{1/2} S u, |A|^{1/2} v)_\cH 
     = (S |A|^{1/2}  u, |A|^{1/2} v)_\cH = 
     (|A|^{1/2}  u, S |A|^{1/2} v)_\cH = (u, S v)_\cV,
 \end{align*}
 \revv{for all $u, v \in \cV$,}
 i.e., $S$ is selfadjoint in the $\cV$-inner product too. This implies
 that $S$ is also the $\cV$-orthogonal projector onto $E$. Hence,
 using~\eqref{ineq||},
 \begin{equation}
   \label{eq:13b}
   |S e_h - e_h |_a\leq \|S e_h - e_h\|_{\cV} = \dist{\cV}( e_h, E)
 \end{equation}
 for any $e_h \in E_h$.
 Combining~\eqref{eq:14-F} with~\eqref{eq:13b}, we  conclude that 
 \begin{align*}
   | (S -I)Q |_{a, E_h} 
   & \le
     \left( 
     \sup_{0 \ne e_h \in E_h} \frac{ \|e_h\|_{\cV}} {\| e_h\|_{\cH}}
     \right) 
     \left( \sup_{0 \ne e_h \in E_h } 
     \frac{ |(S - I) e_h |_a}{ \| e_h \|_\cV}
     \right)
   \\
   & \le  \lmax
     \left( \sup_{0 \ne e_h \in E_h } 
     \frac{ \dist{\cV} (e_h, E)}{ \| e_h \|_\cV}
     \right)
 \end{align*}
 \revv{The first inequality of the corollary now follows from~\eqref{eq:gapV}.}

 \revv{
 Let $g = \gap_\cV(E, E_h)$ and $e_h \in E_h$. Then~\eqref{eq:13b} implies
 \begin{align}
   \label{eq:13}
 \|S e_h \|_\cV 
   &  \ge 
     \| e_h \|_\cV - \| e_h - Se_h \|_\cV 
     = \| e_h \|_\cV 
     \left( 1 - \frac{ \dist{\cV} (e_h, E)}{\| e_h\|_{\cV} }\right)
     \ge \| e_h \|_\cV 
     \left( 1 - g\right).
 \end{align}
 Therefore, 
 \begin{align*}
   \frac{\| e_h\|_\cV}{ \| e_h\|_\cH} 
   & 
     = \frac{\| e_h\|_\cV}{ \|S e_h\|_\cV} 
     \;\frac{\| Se_h\|_\cV}{ \| e_h\|_\cH} 
     \le 
      \frac{1}{1-g}     
     \;\frac{\| Se_h\|_\cV}{ \| e_h\|_\cH}.
 \end{align*}
 Since
 $\| S e_h \|_\cV^2 = |(|A| Se_h, Se_h)_{\cH}| = |(U_A A Se_h,
 Se_h)_{\cH} | \le \| A_E\| \| Se_h \|_\cH^2 \le \| A_E\|\| e_h
 \|_\cH^2$,
 \begin{align*}
   \lmax 
   &= \sup_{0 \ne e_h \in E_h} \frac{\| e_h\|_\cV}{ \| e_h\|_\cH} 
     \le 
      \frac{1}{1-g}     \;
     \sup_{0 \ne e_h \in E_h}\frac{\| Se_h\|_\cV}{ \| e_h\|_\cH}
      \le 
     \frac{1}{1-g}  \| A_E\|^{1/2}.
 \end{align*}
 This completes the proof.}
\end{proof}

\revv{Note that the second inequality of
  Corollary~\ref{EigenvalueError_Gaps|A|} allows one to bound $\lmax$
  independently of $h$ when $\gap_\cV(E, E_h) \to 0$.} 
A class of examples where Corollary~\ref{EigenvalueError_Gaps|A|}
immediately applies is given by the positive forms of
Example~\ref{eg:positiveform}. For such operators, we have
$|A|^{1/2} = A^{1/2}$, so
$(u, v)_\cV = a(u,v) = (A^{1/2} u, A^{1/2} v)$ holds for all
$u, v \in \dom(a)$ and Corollary~\ref{EigenvalueError_Gaps|A|}
applies.  As a final remark, we also note that in the case in which
$\cV$ is normed with a norm equivalent to $\||A|^{1/2}\cdot\|_{\cH}$,
the above argument provides the estimate
\begin{equation}
\hdist( \Lambda, \Lambda_h) \le (C_1\,\Lambda_h^{\max})^2 \gap_\cV(E, E_h)^2
+ 
C_0 \| A_E \| \, \gap_\cH(E, E_h)^2.    
\end{equation}
where $C_1$ depends on the equivalence constants for norms $\|\cdot\|_\cV$ and $\||A|^{1/2}\cdot\|_{\cH}$.

\section{Application to the discretization of a model operator}
\label{sec:fem}

The purpose of this section is to provide an example for application
and illustration of the theoretical framework developed in the
previous sections.  A simple model problem is obtained by setting
\[
{  \cH = L^2(\om), \quad
  \cA = -\Delta, \quad
  \dom(\cA) = \{ \psi\in H_0^1(\om):~\Delta\psi\in L^2(\om)\} ,\quad
  \cV = H_0^1(\om),}
\]
{where $\om$} $\subset \RR^d$ ($d=2,3$) is a bounded polyhedral
domain with Lipschitz boundary.  Note that here $\| u \|_\cV$ is set
to $\| A^{1/2} u \|_\cH = \| \grad u \|_{L^2(\om)} = |u|_{H^1(\om)}$,
which is equivalent to $H^1(\om)$-norm due to the boundary condition.
Throughout, we use standard notations for norms ($ \| \cdot \|_X$) and
seminorms ($|\cdot|_X$) on Sobolev spaces~($X$).  The above set
operator $A$ is the operator associated to the form
\[
a(u,v) = \int_\om \grad u \cdot \grad \overline{v} \; dx, \quad 
u, v \in \dom(a) = \cV = H_0^1(\om).
\]
Hence this model problem fits into Example~\ref{eg:case:B} and
Assumption~\ref{asm:Vshort} holds.

To calculate the application of the resolvent $u = R(z) v$, we need to
solve the operator equation $(z - \cA ) u = v$ for any 
$z$  in the resolvent set of $A$. Using the form 
 $
b_z(u,v) = z(u,v)_\cH - a(u,v),
$
this
equation may be stated in weak form as the problem of finding
$R(z) v \in H_0^1(\om) \subset \cH$ satisfying
\begin{equation}
  \label{eq:Rzv-weak}
	b_z(R(z)v,w) = (v,w)_\cH
\qquad \text{ for all } w\in H_0^1(\om).
\end{equation}
\revv{
We provide a bound on the biliear form $b_z$ that will be used in
subsequent analysis.
\begin{lemma}\label{bzBound}
It holds that
\begin{align*}
|b_z(u,v)|\leq \alpha(z)|u|_{H_0^1(\om)}|v|_{H_0^1(\om)}\qquad\mbox{for all
  }u,v\in H_0^1(\om)~,
\end{align*}
where $\alpha(z)=\|zA^{-1}-I\|_\cH=\sup\{|\lambda-z|/|\lambda|:\,\lambda\in\Sigma(A)\}$.
\end{lemma}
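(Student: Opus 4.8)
The plan is to recast the sesquilinear form $b_z$ so that both of its terms become $\cH$-inner products of $A^{1/2}u$ and $A^{1/2}v$, after which the estimate drops out of Cauchy--Schwarz. Here $A=-\Delta$ is positive and selfadjoint with $0\notin\Sigma(A)$ (by the Poincar\'e inequality), so $A^{-1}$ is an everywhere-defined bounded selfadjoint operator, $\dom(A^{1/2})=\cV=H_0^1(\om)$, and $\|A^{1/2}u\|_\cH^2=a(u,u)=|u|_{H_0^1(\om)}^2$ for $u\in H_0^1(\om)$. Writing $\tilde u=A^{1/2}u$ and $\tilde v=A^{1/2}v$ (so $u=A^{-1/2}\tilde u$, $v=A^{-1/2}\tilde v$) and moving one factor $A^{-1/2}$ across the inner product gives $(u,v)_\cH=(A^{-1/2}\tilde u,A^{-1/2}\tilde v)_\cH=(A^{-1}\tilde u,\tilde v)_\cH$, whereas $a(u,v)=(\tilde u,\tilde v)_\cH$. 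Hence
\[
  b_z(u,v)=z(A^{-1}\tilde u,\tilde v)_\cH-(\tilde u,\tilde v)_\cH=\bigl((zA^{-1}-I)\tilde u,\tilde v\bigr)_\cH,
\]
where the last step uses linearity of $(\cdot,\cdot)_\cH$ in its first slot; under the opposite convention $z$ would pass into the second slot as $\bar z$, which is harmless since $\|\bar zA^{-1}-I\|_\cH=\|(zA^{-1}-I)^{*}\|_\cH=\|zA^{-1}-I\|_\cH$.

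Next I would apply Cauchy--Schwarz in $\cH$ and the definition of the operator norm,
\[
  |b_z(u,v)|\le\|(zA^{-1}-I)\tilde u\|_\cH\,\|\tilde v\|_\cH\le\|zA^{-1}-I\|_\cH\,\|\tilde u\|_\cH\,\|\tilde v\|_\cH,
\]
and then substitute $\|\tilde u\|_\cH=|u|_{H_0^1(\om)}$ and $\|\tilde v\|_\cH=|v|_{H_0^1(\om)}$ to get the asserted inequality with constant $\alpha(z)=\|zA^{-1}-I\|_\cH$.

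It then remains to justify the spectral formula $\|zA^{-1}-I\|_\cH=\sup\{|\lambda-z|/|\lambda|:\lambda\in\Sigma(A)\}$. For this I would invoke the spectral theorem: $zA^{-1}-I$ is normal, being the continuous function $t\mapsto zt-1$ of the bounded selfadjoint operator $A^{-1}$, whose spectrum is $\Sigma(A^{-1})=\overline{\{\lambda^{-1}:\lambda\in\Sigma(A)\}}$. Its norm therefore equals its spectral radius,
\[
  \|zA^{-1}-I\|_\cH=\sup_{s\in\Sigma(A^{-1})}|zs-1|=\sup_{\lambda\in\Sigma(A)}\Bigl|\tfrac{z}{\lambda}-1\Bigr|=\sup_{\lambda\in\Sigma(A)}\frac{|\lambda-z|}{|\lambda|},
\]
where replacing the supremum over $\Sigma(A^{-1})$ (which may contain $0$) by the supremum over $\Sigma(A)$ is legitimate because $\lambda\mapsto|\lambda-z|/|\lambda|$ extends continuously to $\lambda=\infty$ with limiting value $1$.

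The computation is essentially routine; the only places calling for a little care are the domain bookkeeping behind the identity $(u,v)_\cH=(A^{-1}A^{1/2}u,A^{1/2}v)_\cH$ (valid because $u,v\in\dom(A^{1/2})$ and $A^{-1}$ is bounded) and the appeal to normality and the spectral theorem to pass from the operator norm of $zA^{-1}-I$ to the supremum over $\Sigma(A)$.
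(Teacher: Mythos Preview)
Your proof is correct and follows essentially the same route as the paper: rewrite $b_z(u,v)=((zA^{-1}-I)A^{1/2}u,A^{1/2}v)_\cH$, apply Cauchy--Schwarz to extract the operator norm $\alpha(z)=\|zA^{-1}-I\|_\cH$, and then invoke normality to identify this norm with the spectral radius $\sup_{\lambda\in\Sigma(A)}|\lambda-z|/|\lambda|$. You supply more detail (the domain bookkeeping, the handling of the inner-product convention, and the passage from $\Sigma(A^{-1})$ to $\Sigma(A)$), but the argument is the same.
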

\begin{proof}
Recognizing that $b_z(u,v)=((zA^{-1}-I)A^{1/2}u,A^{1/2}v)_\cH$, we see
that
\begin{align*}
|b_z(u,v)|\leq \|zA^{-1}-I\|_\cH \|A^{1/2}u\|_\cH \|A^{1/2}v\|_\cH=\alpha(z) |u|_{H_0^1(\om)}|v|_{H_0^1(\om)}~.
\end{align*}
Since $zA^{-1}-I$ is normal, its $\cH$-norm, $\alpha(z)$,  is equal to
its spectral radius.  
Therefore,
$\alpha(z)=\sup\{|\lambda-z|/|\lambda|:\,\lambda\in\Sigma(A)\}$, as claimed.
\end{proof}
The quantity
\begin{equation}
  \label{eq:beta}
  \beta(z)=\|AR(z)\|_\cH=\sup\{|\lambda|/|\lambda-z|:\,\lambda\in\Sigma(A)\}  
\end{equation}
also figures in our
analysis.  The last equality of the $\cH$-norm and the spectral radius
again follows from the normality of $AR(z)$. 
It can also be found  in~\cite[p. 273, Equation (3.17)]{Kato1995}.
Since $\Sigma(A)$ is
real, we see that $\alpha(\bar{z})=\alpha(z)$ and $\beta(\bar{z})=\beta(z)$.
}

Next, suppose $\om$ is partitioned by a conforming simplicial finite
element mesh $\oh$, where $h$ equals the maximum of the diameters of
all mesh elements. We shall assume that $\oh$ is shape regular and
quasiuniform (see e.g.,~\cite{Ern2004} for standard finite element
definitions). Set $\cV_h$ equal to the Lagrange finite element
subspace of $H_0^1(\om)$ consisting of continuous functions, which
when restricted to any mesh element $K$ in $\oh$, is in $P_{p}(K)$ for
some $p\ge 1$.  Here and throughout $P_\ell(K)$ denote the set of
polynomials of total degree at most~$\ell$ restricted to $K$.  It is
well known \cite{Ern2004} that there is a $C_{\text{app}}>0$
independent of $h$ such that
  \begin{equation}
    \label{eq:approxFEM}
    \inf_{\nu_h \in \cV_h} \revv{| \nu -   \nu_h |_{H^1(\om)}}
    \le 
    C_{\text{app}}
    h^r |\nu|_{H^{1+r}(\om)}
  \end{equation}
  for any $0\le r \le p$ and any $\nu \in H^{1+r}(\om)$.

Consider any $v \in \cH = L^2(\om)$.  The approximation of the
resolvent by the finite element method, namely $R_h(z) v$, is a function  in 
$\cV_h$ satisfying
\begin{equation}
  \label{eq:Rzv-weak-h}
    b_z(R_h(z) v, w) = (v, w) 
    \qquad \text{ for all } w\in \cV_h.
\end{equation}
It will follow from the ensuing analysis that~\eqref{eq:Rzv-weak-h}
uniquely defines $R_h(z) v $ in $\cV_h$ provided $h$ is sufficiently
small. The analysis is under the following regularity assumption.
\begin{assumption}
  \label{asm:reg}
  Suppose there are positive constants $\Creg$ and $s$ such that the solution
  $u^f \in \cV$ of the Dirichlet problem $-\Delta u^f = f$ admits the
  regularity estimate
\begin{equation}
	\label{eq:reg}
	\| u^f \|_{H^{1+s} (\om)} \le \Creg \| f \|_\cH
        \quad\text{ for any } f \in \cV.
\end{equation}
Suppose also that there is a number $s_E \ge s$ such that 
\begin{equation}
	\label{eq:reg-eig}
	\| u^f \|_{H^{1+{s_E}} (\om)} \le \Creg \| f \|_\cH
        \quad\text{ for any } f \in E.
\end{equation}
\end{assumption}
Standard regularity results for elliptic operators (see,
e.g.~\cite{Grisvard1992,Grisvard2011}) yield that
$\dom(A)\supset H^{1+s}(\om)$ for some $s>0$ depending on the geometry
of $\om$.  For example, if $\om$ is convex, we may take $s=1$
in~\eqref{eq:reg}; and if $\om\subset\RR^2$ is non-convex, with its
largest interior angle at a corner being $\pi/\alpha$ for some
$1/2<\alpha<1$, we may take any positive $s<\alpha$.
One can often show higher regularity when $f$ is restricted to the
eigenspace~$E$, which is why we additionally
assume~\eqref{eq:reg-eig}.  For example, if $\om=(0,1)\times(0,1)$,
all eigenfunctions are analytic, having the form
$\sin(m\pi x)\sin(n\pi y)$, for any positive integers $m,n$.  These
expressions, when viewed as functions on the L-shaped hexagon
$\om_{\mathrm{L}}=(0,2)\times(0,2)\setminus [1,2]\times[1,2]$, also
yield smooth eigenfunctions of $\om_{\mathrm{L}}$. But not all
eigenfunctions of $\om_{\mathrm{L}}$ are so regular.

\revv{
The proof of the next result is modeled after a  
classical argument of~\cite{Schat74}, and employs the quantities
$\alpha(z)$ and $\beta(z)$ introduced above.
}

\begin{lemma} \label{lem:R-Rh:Dir}
  Suppose Assumption~\ref{asm:reg} holds. Let $z$ be in the resolvent
  set of $A$.  Then there are positive constants $C$ and $h_0$
  (depending on $z$) such that for all $h < h_0$
  \begin{align}
    \label{eq:R-R_h-V}
    \| R(z) - R_h(z)  \|_\cV 
    & \le C h^r, 
    &
    \left\| \big[  R(z) - R_h(z) \big]
    \raisebox{-0.1em}{\ensuremath{\big|_E}} \right\|_\cV
    & \le C h^{r_E}, 
    \\
    \label{eq:R-R_h-H}
    \| R(z) - R_h(z)  \|_\cH
    & \le C h^{2r},
    &
    \left\| \big[  R(z) - R_h(z) \big]
    \raisebox{-0.1em}{\ensuremath{\big|_E}} \right\|_\cH
    & \le C h^{r_E + r},
  \end{align}
  where $r = \min(s, p)$ and $r_E = \min(s_E, p)$. \revv{We may choose
  $h_0 = [2\alpha(z)\beta(z) |z| \Capp \Creg]^{-1/r}$ and
  $C = 2\alpha(z)\beta(z) \Capp \Creg.$}
\end{lemma}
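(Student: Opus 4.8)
The strategy is a Céa-type argument adapted to the non-coercive, $z$-dependent form $b_z$, following Schatz. First I would establish a discrete inf-sup condition for $b_z$ on $\cV_h$. Observe that $b_z(u,v) = ((zA^{-1}-I)A^{1/2}u, A^{1/2}v)_\cH$, so for $u \in \cV_h$ one has $|b_z(u,u_\star)| \ge |u|_{H^1_0(\om)}^2$ for a suitable test function; more precisely, writing $b_z$ in terms of $A^{-1}$ and using that $zA^{-1}-I$ is boundedly invertible with $\|(zA^{-1}-I)^{-1}\|_\cH = \beta(z)/|z|$ (since its eigenvalues are $(z-\lambda)/(-\lambda)$, whose reciprocals have modulus $|\lambda|/|z-\lambda|\cdot\ldots$; more directly $\|(zA^{-1}-I)^{-1}\|_\cH = \sup\{|\lambda|/|z-\lambda|\}/|z| = \beta(z)/|z|$), the exact problem~\eqref{eq:Rzv-weak} is well posed. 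The discrete inf-sup constant, however, is not inherited for free because $\cV_h$ is not invariant under $A^{-1}$. Instead I would use the Schatz duality trick: combining the approximation property~\eqref{eq:approxFEM} with the regularity estimate~\eqref{eq:reg}, one shows that for $h$ small enough the Galerkin projection associated with $b_z$ is well defined and quasi-optimal.

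Concretely, let $u = R(z)v$ and $u_h = R_h(z)v$, and let $P_h u \in \cV_h$ denote the $\cV$-orthogonal (i.e.\ $H^1_0$-seminorm) projection. Galerkin orthogonality $b_z(u - u_h, w) = 0$ for all $w \in \cV_h$ gives, for any $\chi \in \cV_h$,
\[
b_z(P_h u - u_h, w) = b_z(P_h u - u, w), \qquad w \in \cV_h.
\]
Taking $w = P_h u - u_h$ and using Lemma~\ref{bzBound} for the right-hand side, together with a discrete inf-sup bound, yields $|P_h u - u_h|_{H^1_0} \le \alpha(z)\,(\text{inf-sup const})^{-1}\,|u - P_h u|_{H^1_0}$, hence by the triangle inequality $|u - u_h|_{H^1_0} \le C\,|u - P_h u|_{H^1_0} \le C\,\Capp\, h^r |u|_{H^{1+r}(\om)}$. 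The $H^{1+r}$-norm of $u = R(z)v$ is controlled by $\|v\|_\cH$ through~\eqref{eq:reg} (applied to $f = zu - $ (something); more carefully, $-\Delta u = \lambda$-shifted data, so $\|u\|_{H^{1+r}} \le \Creg(|z|\|u\|_\cH + \|v\|_\cH)$, and $\|u\|_\cH \le \beta(z)/|z|\cdot\ldots$ is bounded by the resolvent norm). Tracking the constants gives the stated $C = 2\alpha(z)\beta(z)\Capp\Creg$ and the threshold $h_0 = [2\alpha(z)\beta(z)|z|\Capp\Creg]^{-1/r}$, the factor $2$ absorbing the geometric-series estimate that makes the discrete inf-sup constant at least, say, $1/(2\beta(z))$ once $h < h_0$. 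Restricting $v$ to $E$ replaces $r$ by $r_E$ via~\eqref{eq:reg-eig}, giving the second column of~\eqref{eq:R-R_h-V}.

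For the $\cH$-norm bounds in~\eqref{eq:R-R_h-H} I would run the Aubin–Nitsche duality argument: set $e = u - u_h$, let $\psi$ solve $-\Delta \psi = e$ (or the adjoint problem $\overline{b_z}(\phi, \cdot) = (e, \cdot)_\cH$), so $\|\psi\|_{H^{1+r}} \le \Creg\|e\|_\cH$ up to the same $z$-dependent factors, and write $\|e\|_\cH^2 = b_{z}(e, \psi)$-type identity $= b_z(e, \psi - P_h\psi)$ by Galerkin orthogonality, then bound by $\alpha(z)|e|_{H^1_0}\,|\psi - P_h\psi|_{H^1_0} \le \alpha(z)|e|_{H^1_0}\Capp h^r \|\psi\|_{H^{1+r}}$, and close the loop using $\|e\|_\cH \le C_\cV^{-1}\ldots$; since $\alpha(\bar z) = \alpha(z)$ and $\beta(\bar z) = \beta(z)$, the adjoint problem carries the same constants, producing the $h^{2r}$ (resp.\ $h^{r_E+r}$ when restricted to $E$, using the better primal rate and the generic dual rate).

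\textbf{Main obstacle.} The delicate point is the discrete inf-sup / well-posedness of~\eqref{eq:Rzv-weak-h}: because $b_z$ is indefinite (its real part need not be sign-definite on $\cV_h$), one cannot invoke Lax–Milgram directly, and must instead argue via the Schatz perturbation argument — the exact problem is well posed, the consistency error $|u - P_h u|_{H^1_0} \to 0$, and a duality estimate shows the Galerkin error operator is a small perturbation of the identity for $h < h_0$. Getting the explicit, $z$-dependent threshold $h_0$ and the clean constant $C = 2\alpha(z)\beta(z)\Capp\Creg$ out of this requires care in chaining the regularity estimate, the approximation estimate, and the bound $\beta(z) = \|AR(z)\|_\cH$ — this bookkeeping, rather than any conceptual difficulty, is where the real work lies.
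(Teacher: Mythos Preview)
Your plan is correct and is essentially the Schatz argument the paper uses, but the paper organizes it in the reverse order, and that reorganization is what makes the explicit constants drop out cleanly. Rather than first establishing a discrete inf--sup and then running Aubin--Nitsche, the paper \emph{starts} with the duality step: with $e = R(z)f - R_h(z)f$ and $w = R(\bar z)e$, the identity $b_z(e,w) = \|e\|_\cH^2$ together with Galerkin orthogonality and the regularity bound $\|w\|_{H^{1+r}} \le \Creg\|AR(\bar z)e\|_\cH \le \Creg\beta(z)\|e\|_\cH$ immediately gives
\[
\|e\|_\cH \le \alpha(z)\beta(z)\Capp\Creg\, h^r\, \|e\|_\cV.
\]
Only then does the paper turn to the $\cV$-estimate, using the exact identity
\[
\bigl|\,z\|e\|_\cH^2 - \|e\|_\cV^2\,\bigr| = |b_z(e,e)| = |b_z(e, u - v_h)| \le \alpha(z)\,\|e\|_\cV \inf_{v_h\in\cV_h}\|u-v_h\|_\cV,
\]
and absorbing the $\|e\|_\cH^2$ term via the duality estimate just proved. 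This bypasses any explicit discrete inf--sup statement and yields the threshold $h_0$ and the constant $C=2\alpha(z)\beta(z)\Capp\Creg$ with essentially no bookkeeping; the factor $2$ is simply the condition $1 - \alpha(z)\beta(z)|z|\Capp\Creg h^r > 1/2$. Your route would reach the same place, but the constant-chasing you flag as the ``real work'' largely disappears if you swap the order.
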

\begin{proof}
  Let $f \in \cH$, $e = R(z) f- R_h(z)f$, and $w = R(\overline{z}) e$.
\revv{
  Then $-\Delta w = A w = A R(\overline{z}) e$. Hence 
it follows by Assumption~\ref{asm:reg} and~\eqref{eq:beta} that
  \begin{align}
    \label{eq:w_Hs_bound}
    \| w \|_{H^{1+r}(\om)} 
    & \le \Creg \| AR(\overline{z}) e\|_\cH\leq
      \Creg\beta(\overline{z})\|e\|_\cH=
      \Creg\beta(z)\|e\|_\cH.
  \end{align}
}%
  Subtracting~\eqref{eq:Rzv-weak-h} from~\eqref{eq:Rzv-weak}, we have
  $b_z( e, w_h) =0$ for any $w_h \in \cV_h$.
  Note also that $b_z(v, w) = (v, e)_\cH$ for any $v \in
  \cV$. Choosing $v = e$ and applying Lemma~\ref{bzBound},
\revv{
  \begin{align*}
    \| e \|_\cH^2 
    & = (e, e)_\cH = b_z(e, w) = b_z(e, w-w_h)\leq \alpha(z) | e |_{H^1(\om)} | w -   w_h |_{H^1(\om)}~.
  \end{align*}
  Using~\eqref{eq:approxFEM} with $r=\min(s, p)>0$, together
  with~\eqref{eq:w_Hs_bound}, we deduce that
\begin{align*}
\| e \|_\cH^2 \le \alpha(z) C_{\text{app}} h^r |w|_{H^{1+r}(\om)} | e
  |_{H^1(\om)} \leq \alpha(z)\beta(z) C_{\text{app}}\Creg h^r
  \|e\|_\cH | e  |_{H^1(\om)},
\end{align*}}%
\revv{i.e., }
  \begin{equation}
    \label{eq:e-L1H1}
    \| e \|_\cH \le \revv{\alpha(z)\beta(z) C_{\text{app}}\Creg  }h^r \| e \|_\cV.
  \end{equation}
  
  Next, setting $u = R(z) f$,  observe that for any $v_h \in \cV_h$ we have
  \[
  \bigg|
  z \| e \|_\cH^2 - \|e\|_\cV^2 
  \bigg| = |b_z(e,e)| = |b_z(e, u - v_h)|.
  \]
  Hence~\eqref{eq:e-L1H1} \revv{and Lemma~\ref{bzBound} imply}
\revv{
  \[
    \big[1 - \alpha(z)\beta(z) C_{\text{app}}\Creg |z| h^r\big]\,
    \|e\|_\cV^2 \le \alpha(z)  \| e \|_{\cV}
  \inf_{v_h \in \cV_h}\| u - v_h\|_{\cV}.
  \]
  When $h$ is so small that $
  1 - \alpha(z)\beta(z) C_{\text{app}}\Creg |z| h^r
  >1/2,$
  using~\eqref{eq:approxFEM}, we obtain
  $ \| e\|_\cV \le 2\alpha(z) \Capp h^r |u|_{H^{1+r}(\om)}.  $
  Moreover, since $-\Delta u = A u = R(z) f$, using
  Assumption~\ref{asm:reg} and arguing as in~\eqref{eq:w_Hs_bound}, we
  have $\| u \|_{H^{1+r}(\om) } \le \Creg \beta(z)  \| f \|_\cH$. } Thus,
  \begin{equation}
    \label{eq:Rv}
    \| R(z) f - R_h(z) f \|_\cV
    \le \revv{2 \Capp\Creg\alpha(z)\beta(z)}  h^r \| f \|_{\cH}
  \end{equation}
  for any $f \in \cH$.  Now, if $f$ is in $\cV$, then since
  $\| f \|_\cH \le C\| f \|_\cV$, the bound~\eqref{eq:Rv} proves the
  first inequality in \eqref{eq:R-R_h-V}.  Combining
  with~\eqref{eq:e-L1H1}, we obtain the first inequality
  of~\eqref{eq:R-R_h-H} as well.

  To prove the remaining inequalities, we let $f \in E$ and repeat the
  above argument leading to~\eqref{eq:Rv} with $r_E =\min(s_E, p)$ in place of
  $r$.  This proves the second inequality of~\eqref{eq:R-R_h-V}.
  Then using~\eqref{eq:e-L1H1} (where we cannot, in general, 
  replace $r$ by $r_E$), the second inequality
  of~\eqref{eq:R-R_h-H} also follows.
\end{proof}

With the help of the lemma, we can now prove the main result of this
section.

\begin{theorem} 
  \label{thm:total} 
  Suppose Assumption~\ref{asm:rN} (spectral separation) and
  Assumption~\ref{asm:reg} (elliptic regularity) holds, and that $\dim
  E_h^{(0)}=\dim(S E_h^{(0)})=\dim E$.  Then, there
  are positive constants $C$ and $h_0$ such that for all $h<h_0$, the
  subspace iterates $\Eh \ell$ converge (in $\gap_\cV$) to a space $E_h$
  satisfying
  \begin{align}
    \label{eq:gapVEEhFEM}
    \gap_\cV (E, E_h) 
    & \le    C\,  h^{r_E},
    \\
    \label{eq:gapHEEhFEM}
    \gap_\cH (E, E_h) 
    & \le    C\,  h^{r_E+r},
    \\
    \label{eq:LLhFEM}
    \hdist( \Lambda, \Lambda_h) 
    & \le    C\,  h^{2r_E},
  \end{align}
  where $r = \min(s, p)$ and $r_E = \min(s_E, p)$.
\end{theorem}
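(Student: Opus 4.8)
The plan is to assemble the conclusion from the abstract machinery of Sections~\ref{sec:limit}--\ref{sec:ew} together with the concrete finite element estimates of Lemma~\ref{lem:R-Rh:Dir}. First I would check that Assumptions~\ref{asm:rN}--\ref{asm:sqrootmodA} all hold in the present setting. Assumption~\ref{asm:rN} is hypothesized. The model is the special case of Example~\ref{eg:case:B} in which $a$ is the Dirichlet form on $H_0^1(\om)$, which is strictly coercive, so Assumption~\ref{asm:Vshort} holds with $\cV = H_0^1(\om)$. Since every $z_k$ lies in the resolvent set of $A$ by Assumption~\ref{asm:rN}, Lemma~\ref{lem:R-Rh:Dir} applies at $z = z_k$, and the first bound in~\eqref{eq:R-R_h-V} gives $\| R(z_k) - R_h(z_k)\|_\cV \le C h^r \to 0$, so Assumption~\ref{asm:Rlim} holds. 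Assumption~\ref{asm:sqrootmodA} is immediate because $\cV_h \subset H_0^1(\om) = \dom(a)$. Finally, the hypothesis $\dim \Eh 0 = \dim(S\Eh 0) = \dim E$ forces $S|_{\Eh 0}$ to be injective; since the contour-integral representation of $P_N - P_h$ together with~\eqref{eq:Shlim-V} gives $\| P_h - P_N\|_\cV \to 0$ and $S = P_N$, the restriction $P_h|_{\Eh 0}$ is injective for $h$ small, hence $\dim(P_h\Eh 0) = \dim E_h = m$. Theorem~\ref{thm:iteratesgap} then yields the claimed $\gap_\cV$-convergence of $\Eh\ell$ to the space $E_h$ of Definition~\ref{def:Eh}.

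For the $\cV$-estimate~\eqref{eq:gapVEEhFEM}, I would simply feed the second bound in~\eqref{eq:R-R_h-V}, namely $\|[R(z_k) - R_h(z_k)]|_E\|_\cV \le C h^{r_E}$, into the conclusion~\eqref{eq:gapEEh} of Theorem~\ref{thm:discrete}, obtaining $\gap_\cV(E, E_h) \le C_N W \max_k \|[R(z_k) - R_h(z_k)]|_E\|_\cV \le C h^{r_E}$.

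The $\cH$-estimate~\eqref{eq:gapHEEhFEM} is the step I expect to require the most care, since Remark~\ref{rem:gapH} only delivers $\gap_\cH(E,E_h) \le C\gap_\cV(E,E_h) \le Ch^{r_E}$, which is weaker than claimed. The idea is to rerun the proof of Theorem~\ref{thm:discrete} with $\cH$ in place of $\cV$ throughout. This is legitimate here because, beyond the abstract assumptions, $S_N$ and $S_N^h$ are also bounded operators on $\cH$ (the $R(z_k)$ being normal resolvents and $R_h(z_k)\colon\cH\to\cV_h\subset\cH$ being bounded finite-rank operators), the spectrum of $S_N$ as an operator on $\cH$ coincides with that used when $\Theta$ was constructed in Section~\ref{sec:limit} (the unitary $A^{1/2}\colon\cV\to\cH$ intertwines $S_N|_\cV$ and $S_N|_\cH$, since $S_N=r_N(A)$ commutes with $A^{1/2}$), so $\Theta$ avoids it at positive distance, and $\|S_N - S_N^h\|_\cH \le W\max_k \|R(z_k) - R_h(z_k)\|_\cH \le C h^{2r} \to 0$ by the first bound in~\eqref{eq:R-R_h-H}, so a Neumann-series perturbation keeps $\Theta$ in the $\cH$-resolvent set of $S_N^h$ with $\|(z - S_N^h)^{-1}\|_\cH$ bounded on $\Theta$ uniformly in small $h$. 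Using that $E$ is invariant under $(z - S_N)^{-1}$ in the identity $P_N - P_h = \frac{1}{2\pi\ii}\oint_\Theta (z-S_N^h)^{-1}(S_N - S_N^h)(z-S_N)^{-1}\,dz$, one gets $\dist{\cH}(e,E_h) \le \|(P_N-P_h)e\|_\cH \le C_N W\max_k \|[R(z_k)-R_h(z_k)]|_E\|_\cH\,\|e\|_\cH$ for $e\in E$; the second bound in~\eqref{eq:R-R_h-H} makes the right side $\le C h^{r_E + r}\|e\|_\cH$, which is $<1$ on $U_E^\cH$ for $h$ small, so \cite[Theorem~I.6.34]{Kato1995} together with $\dim E_h = \dim E = m$ gives $\gap_\cH(E,E_h) = \sup_{e\in U_E^\cH}\dist{\cH}(e,E_h) \le C h^{r_E+r}$.

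Finally, for~\eqref{eq:LLhFEM}, since $A = -\Delta$ with the given boundary condition is positive we have $|A| = A$, hence $\||A|^{1/2}u\|_\cH = \|A^{1/2}u\|_\cH = \|u\|_\cV$, so Corollary~\ref{EigenvalueError_Gaps|A|} applies and yields $\hdist(\Lambda,\Lambda_h) \le (\lmax)^2\gap_\cV(E,E_h)^2 + C_0\|A_E\|\gap_\cH(E,E_h)^2$, with $(\lmax)^2 \le (1 - \gap_\cV(E,E_h))^{-2}\|A_E\|$. As $\gap_\cV(E,E_h)\to 0$, the factor $(1-\gap_\cV(E,E_h))^{-2}$ is bounded for $h$ small, so using~\eqref{eq:gapVEEhFEM} and~\eqref{eq:gapHEEhFEM} and $r>0$ (whence $h^{2r}\le 1$ for small $h$) we obtain $\hdist(\Lambda,\Lambda_h) \le C(h^{2r_E} + h^{2r_E+2r}) \le C h^{2r_E}$. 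The only genuinely new work beyond bookkeeping is the $\cH$-norm version of Theorem~\ref{thm:discrete} described above; everything else is assembly of previously established results.
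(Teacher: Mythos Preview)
Your proposal is correct and follows essentially the same route as the paper: verify Assumptions~\ref{asm:Vshort}--\ref{asm:sqrootmodA} in the model setting, invoke Theorem~\ref{thm:iteratesgap} for convergence of the iterates, apply Theorem~\ref{thm:discrete} in~$\cV$ for~\eqref{eq:gapVEEhFEM}, re-apply it with $\cV$ replaced by~$\cH$ for~\eqref{eq:gapHEEhFEM}, and finish with Corollary~\ref{EigenvalueError_Gaps|A|} for~\eqref{eq:LLhFEM}. The paper compresses your $\cH$-step into a single sentence by citing Example~\ref{eg:case:A} (the trivial instance $\cV=\cH$ of the abstract framework) together with~\eqref{eq:R-R_h-H}, whereas you spell out why the contour~$\Theta$ and the space~$E_h$ are unchanged under that substitution; you also make explicit the passage from the hypothesis $\dim(S\Eh 0)=m$ to the hypothesis $\dim(P_h\Eh 0)=m$ of Theorem~\ref{thm:iteratesgap}, a point the paper's proof does not address.
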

\begin{proof}
  The proof proceeds by applying the previous theorems after verifying
  their assumptions.  We have already verified
  Assumption~\ref{asm:Vshort} with above set $\cV = H_0^1(\om)$.  In
  view of \eqref{eq:R-R_h-V} of Lemma~\ref{lem:R-Rh:Dir}, since $r>0$,
  Assumption~\ref{asm:Rlim} holds with the same $\cV$.  We may now
  apply Theorem~\ref{thm:iteratesgap}, which yields
  $\gap_\cV(\Eh\ell, E_h) \to 0$. Now the proof
  of~\eqref{eq:gapVEEhFEM} reduces to an application of
  Theorem~\ref{thm:discrete}.
  Next observe that Assumptions~\ref{asm:Vshort} and~\ref{asm:Rlim}
  also hold when $\cV$ is set to $\cH$ (see Example~\ref{eg:case:A}
  and \eqref{eq:R-R_h-H} of Lemma~\ref{lem:R-Rh:Dir}). Applying
  Theorem~\ref{thm:discrete} with this $\cV=\cH$ setting, we obtain
  the estimate~\eqref{eq:gapHEEhFEM}.
Finally,~\eqref{eq:LLhFEM} follows by combining
Corollary~\ref{EigenvalueError_Gaps|A|} with~\eqref{eq:gapVEEhFEM}
and~\eqref{eq:gapHEEhFEM}.
\end{proof}

\section{Numerical Experiments}
\label{sec:Numerics}

We illustrate the convergence results of Theorem~\ref{thm:total} on the model problem
\begin{align*}
-\Delta e=\lambda e\mbox{ in }\Omega\quad,\quad e=0\mbox{ on }\partial\Omega~,
\end{align*}
on three different domains $\Omega\subset\RR^2$.  More specifically,
we consider eigenvalue errors and~\eqref{eq:LLhFEM}.  The experiments
were conducted using~\cite{Gopal17}, which builds a hierarchy of
Python classes representing standard Lagrange finite element
approximations of the filter $S_N$ based on the resolvent
approximations $R_h(z)$, as described in Section~\ref{sec:fem}.   We do not write out the details of the
algorithm because they can be found in our public code~\cite{Gopal17}
or in many previous papers (see e.g., \cite[Algorithm~1.1]{Saad16} and
\cite{GuttePolizTang15}).  As in these references, we perform the
implicit orthogonalization through a small Rayleigh-Ritz eigenproblem
at each iteration.  In general, it is not necessary to perform this
orthogonalization at every step, but in the experiments reported
below, we do so.  For all experiments, filtered subspace iteration is
applied using the Butterworth filter~\eqref{CircleQuad} with $N=8$.
The symmetry (about the real axis) of our filter weights and nodes are
exploited so that only $N/2$ boundary value problems (rather than $N$)
need to be solved for each right hand side per iteration.  The subspace iterations are started
with a random subspace of dimension at least as large as the dimension
of $E$, and the algorithm truncates basis vectors that generate Ritz
values that are deemed too far outside the search interval; in all
cases, we choose this initial subspace dimension to be six.
We stop
the iterations when successive Ritz values differ by less than
$10^{-9}$.  Though changing $N$ does, in some cases, change the number
of subspace iterations used to achieve a prescribed error tolerance
(e.g. three iterations for $N=8$ versus two iterations for $N=16$ for
the Dumbbell problem with $p=3$ and $h=2^{-6}$),
it had no effect on the discretization errors reported here, so we do
not discuss this parameter further.
The
finite element discretizations are implemented using a Python
interface into the C++ finite element library
NGSolve~\cite{Schob17}.  Two parameters govern the discretization: $h$
is the maximum edge length in a quasi-uniform triangulation of
$\Omega$, and $p$ is the polynomial degree in each element.

\subsection{Unit Square}
For the unit square $\Omega=(0,1)\times(0,1)$, the eigenvalues and
eigenvectors may be doubly-indexed by
\begin{align*}
\lambda_{m,n}=(m^2+n^2)\pi^2\quad,\quad e_{m,n}=\sin(m\pi x)\sin(n\pi
  y)\quad,\quad m,n\in\NN~.
\end{align*}
For any subset of the spectrum, the corresponding eigenspaces are
analytic ($s_E=\infty$), and the convexity of $\Omega$ ensures that
$s=1$.  Therefore, Theorem~\ref{thm:total} indicates that the
eigenvalue convergence should behave like $\cO(h^{2p})$.  This is
precisely what is observed in Figure~\ref{SquareEigenvalue} both at
the low end of the spectrum, $\Lambda=\{2\pi^2,5\pi^2\}$, and higher in
the spectrum, $\Lambda=\{128\pi^2,130\pi^2\}$.  We note that both
$2\pi^2$ and $128\pi^2$ are simple eigenvalues, $5\pi^2$ is a double
eigenvalue, and $130\pi^2$ is a quadruple eigenvalue.

For the first set of experiments, the search interval $(0,60)$ was
chosen, so $y=\gamma=30$.  In Figure~\ref{SquareEigenvalueLow}, the
eigenvalue error, $\mathrm{dist}(\Lambda,\Lambda_h)$, is given with respect to
$h$ for (fixed) $p=1,2,3$ and decreasing
$h=2^{-3},2^{-4},\ldots,2^{-7}$. 
For the second set of experiments, the search interval was
$(1260,1290)$, so $y=1275$ and $\gamma=15$.  In order to provide
convergence graphs within the same plot for these more highly
oscillatory eigenvectors, we use $h=2^{-5},2^{-6},2^{-7}$ for $p=2$,
and $h=2^{-4},2^{-5},2^{-6},2^{-7}$ for $p=3$, see
Figure~\ref{SquareEigenvalueHigh}.  For coarser spaces, the
approximations of $130\pi^2$ were far enough outside the search
interval to be rejected by the algorithm, and an approximation for
only $128\pi^2$ was obtained.  A plot of the computed basis for the 
five-dimensional eigenspace corresponding to
$\Lambda=\{128\pi^2,130\pi^2\}$
is given in Figure~\ref{SquareEigenmodes}.  If we label the computed
eigenvalues $\lambda_{1}^h<\lambda_{2}^h<\ldots<\lambda_{5}^h$, with
corresponding eigenvectors $e_{j}^h$, $1\leq j\leq 5$, contour plots of these
eigenvectors are given, from left to right, in
Figure~\ref{SquareEigenmodes}.  One sees that $\mathrm{span}\{e_1^h\}$
approximates $\mathrm{span}\{e_{8,8}\}$, and it appears that
$\mathrm{span}\{e_2^h,e_5^h\}$ approximates
$\mathrm{span}\{e_{3,11},e_{11,3}\}$, and that 
$\mathrm{span}\{e_3^h,e_4^h\}$ approximates
$\mathrm{span}\{e_{7,9},e_{9,7}\}$.

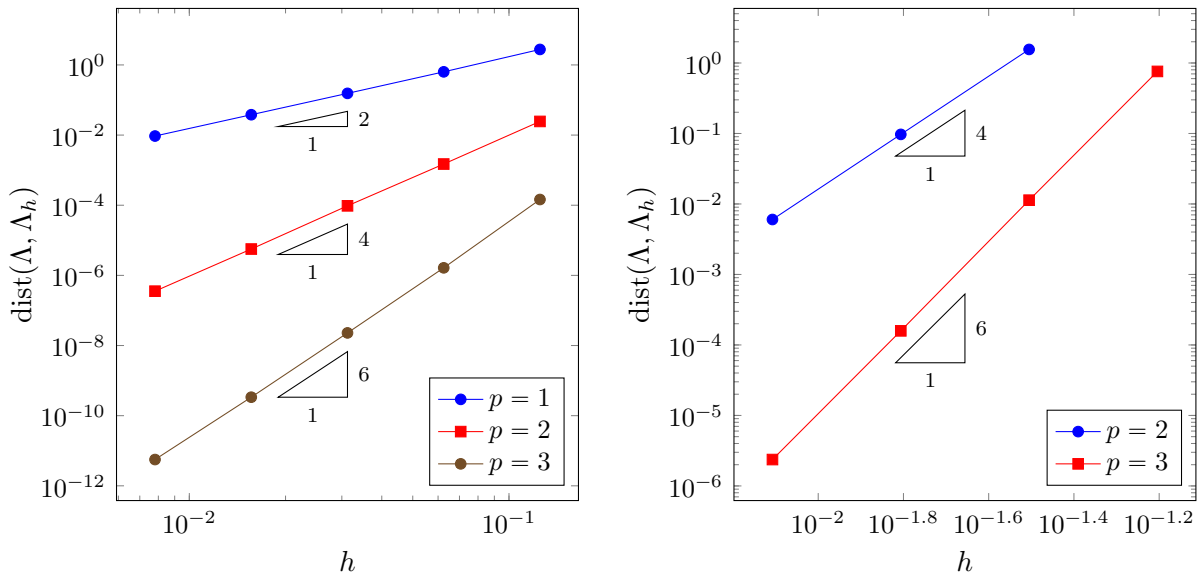
\begin{figure}
\begin{subfigure}[t]{0.49\textwidth} 
     \begin{center}  
      \begin{tikzpicture}
        \begin{loglogaxis}[
          footnotesize,
          width=0.95\textwidth,
          height=\textwidth,
          xlabel=$h$,
          ylabel={$\hdist( {\Lambda}, {\Lambda_h})$},
          legend pos = south east,
          max space between ticks=30pt,
          ]


          \addplot coordinates {
            (1.2500e-01,2.7643e+00)
            (6.2500e-02,6.3527e-01)
            (3.1250e-02,1.5458e-01)
            (1.5625e-02,3.8056e-02)
            (7.8125e-03,9.3877e-03)
           };

          \addplot coordinates {
            (1.2500e-01,2.4497e-02)
            (6.2500e-02,1.4941e-03)
            (3.1250e-02,9.6085e-05)
            (1.5625e-02,5.6346e-06)
            (7.8125e-03,3.5455e-07)
          };

          \addplot coordinates {
            (1.2500e-01,1.4559e-04)
            (6.2500e-02,1.6412e-06)
            (3.1250e-02,2.2897e-08)
            (1.5625e-02,3.3646e-10)
            (7.8125e-03,5.6133e-12)
          };

          \logLogSlopeTriangle{0.5}{0.15}{0.21}{6}{black};
          \logLogSlopeTriangle{0.5}{0.15}{0.5}{4}{black};
          \logLogSlopeTriangle{0.5}{0.15}{0.76}{2}{black};

          \legend{$p=1$, $p=2$, $p=3$}
        \end{loglogaxis}
      \end{tikzpicture}
    \end{center}  
  \caption{\label{SquareEigenvalueLow} Convergence rates for 
    $\Lambda=\{2\pi^2,5\pi^2\}$.}
\end{subfigure}
\begin{subfigure}[t]{0.49\textwidth} 
     \begin{center}  
      \begin{tikzpicture}
        \begin{loglogaxis}[
          footnotesize,
          width=0.95\textwidth,
          height=\textwidth,
          xlabel=$h$,
          ylabel={$\hdist( {\Lambda}, {\Lambda_h})$},
          legend pos = south east,
          max space between ticks=30pt,
          ]


          \addplot coordinates {
            (3.1250e-02,1.5563e+00)
            (1.5625e-02,9.7047e-02)
            (7.8125e-03,6.0208e-03)
          };

          \addplot coordinates {
            (6.2500e-02,7.5914e-01)
            (3.1250e-02,1.1317e-02)
            (1.5625e-02,1.5846e-04)
            (7.8125e-03,2.3673e-06)
          };

          \logLogSlopeTriangle{0.5}{0.15}{0.28}{6}{black};
          \logLogSlopeTriangle{0.5}{0.15}{0.70}{4}{black};
          \legend{$p=2$, $p=3$}
        \end{loglogaxis}
      \end{tikzpicture}
    \end{center}  
  \caption{\label{SquareEigenvalueHigh} Convergence rates for 
    $\Lambda=\{128\pi^2,130\pi^2\}$.}
\end{subfigure}
\caption{\label{SquareEigenvalue} Square: Convergence rates for
  clusters located at the bottom and higher up in the spectrum.}
\end{figure}

\begin{figure}
\begin{center}
%
%
%
\includegraphics[height=1.19in]{./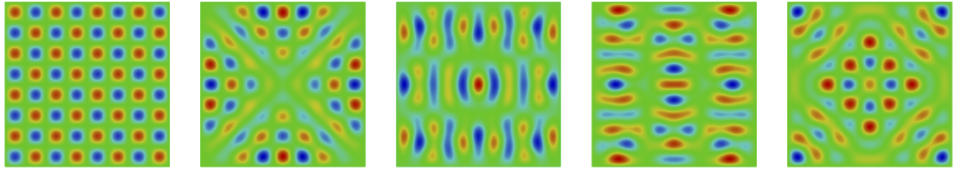}
\end{center}
\caption{\label{SquareEigenmodes} Square: Eigenvectors corresponding
to the eigenvalues $128\pi^2$ (left) and $130\pi^2$.}
\end{figure}

\subsection{L-Shape}

\begin{figure}
\begin{subfigure}[t]{0.32\textwidth} 
     \begin{center}  
      \begin{tikzpicture}
        \begin{loglogaxis}[
          footnotesize,
          width=\textwidth,
          height=2\textwidth,
          ymax=0.25,
          ymin=1.5e-4,
          xlabel=$h$,
          ylabel={$\lambda_h-\lambda$},
          legend pos = north west,
          max space between ticks=30pt,
          ]


          \addplot coordinates {
            (1.2500e-01, 2.1038e-01)
            (6.2500e-02, 6.4855e-02)
            (3.1250e-02, 2.1837e-02)
            (1.5625e-02, 7.8019e-03)
            (7.8125e-03, 2.8998e-03)
           };

          \addplot coordinates {
            (1.2500e-01, 1.8765e-02)
            (6.2500e-02, 6.9814e-03)
            (3.1250e-02, 2.7140e-03)
            (1.5625e-02, 1.0934e-03)
            (7.8125e-03, 4.3037e-04)
          };

          \addplot coordinates {
            (1.2500e-01, 7.2916e-03)
            (6.2500e-02, 2.7382e-03)
            (3.1250e-02, 1.0663e-03)
            (1.5625e-02, 4.2952e-04)
            (7.8125e-03,  1.6916e-04)
          };

          \logLogSlopeTriangle{0.5}{0.15}{0.16}{4/3}{black};
          \logLogSlopeTriangle{0.5}{0.15}{0.28}{4/3}{black};
          \logLogSlopeTriangle{0.5}{0.15}{0.55}{4/3}{black};

          \legend{$p=1$, $p=2$, $p=3$}
        \end{loglogaxis}
      \end{tikzpicture}
    \end{center}  
  \caption{\label{LShapeEigenvalue1} 
    Convergence rates  
    for\\ $\lambda_1\approx 9.6397238$.}
\end{subfigure}
\begin{subfigure}[t]{0.32\textwidth} 
     \begin{center}  
      \begin{tikzpicture}
        \begin{loglogaxis}[
          footnotesize,
          width=\textwidth,
          height=2\textwidth,
          xlabel=$h$,
          legend pos = north west,
          max space between ticks=30pt,
          ]


          \addplot coordinates {
           (1.2500e-01, 2.5227e-01)
            (6.2500e-02, 5.9259e-02)
            (3.1250e-02, 1.4622e-02)
            (1.5625e-02, 3.6044e-03)
            (7.8125e-03, 9.0425e-04)
          };

          \addplot coordinates {
           (1.2500e-01, 1.3977e-03)
            (6.2500e-02, 1.3169e-04)
            (3.1250e-02, 1.5885e-05)
            (1.5625e-02, 2.2339e-06)
            (7.8125e-03, 2.7152e-07)
          };

          \addplot coordinates {
            (1.2500e-01, 7.4197e-05 )
            (6.2500e-02, 1.0461e-05)
            (3.1250e-02, 1.5278e-06)
            (1.5625e-02, 1.8630e-07)
           };

          \logLogSlopeTriangle{0.5}{0.15}{0.24}{3}{black};
          \logLogSlopeTriangle{0.5}{0.15}{0.10}{3}{black};
          \logLogSlopeTriangle{0.5}{0.15}{0.66}{2}{black};
          \legend{$p=1$, $p=2$, $p=3$}
        \end{loglogaxis}
      \end{tikzpicture}
    \end{center}  
  \caption{\label{LshapeEigenvalue2} Convergence rates for \\
    $\lambda_2\approx 15.197252$.}
\end{subfigure}
\begin{subfigure}[t]{0.32\textwidth} 
     \begin{center}  
      \begin{tikzpicture}
        \begin{loglogaxis}[
          footnotesize,
          width=\textwidth,
          height=2\textwidth,
          ymin = 1e-14,
          xlabel=$h$,
          legend pos = south east,
          max space between ticks=30pt,
          ]


         \addplot coordinates {
            (6.2500e-02, 1.0169e-01)
            (3.1250e-02, 2.5019e-02)
            (1.5625e-02, 6.1348e-03)
            (7.8125e-03, 1.5460e-03)
          };

          \addplot coordinates {
           (1.2500e-01, 1.7049e-03)
            (6.2500e-02, 9.9041e-05)
            (3.1250e-02, 6.0110e-06)
            (1.5625e-02, 3.6672e-07)
            (7.8125e-03, 2.3298e-08)
          };
       \addplot coordinates {
         (1.2500e-01, 3.3925e-06)
         (6.2500e-02, 4.1473e-08)
         (3.1250e-02, 6.1214e-10)
         (1.5625e-02, 8.6224e-12)
         };
          \logLogSlopeTriangle{0.5}{0.15}{0.22}{6}{black};
          \logLogSlopeTriangle{0.5}{0.15}{0.54}{4}{black};
          \logLogSlopeTriangle{0.5}{0.15}{0.82}{2}{black};
          \legend{$p=1$, $p=2$, $p=3$}
        \end{loglogaxis}
      \end{tikzpicture}
    \end{center}  
  \caption{\label{LshapeEigenvalue3} Convergence rates for  \\
    $\lambda_3= 2\pi^2$.}
\end{subfigure}
\begin{subfigure}[t]{\textwidth} 
  \centering 
  \includegraphics[height=1.19in]{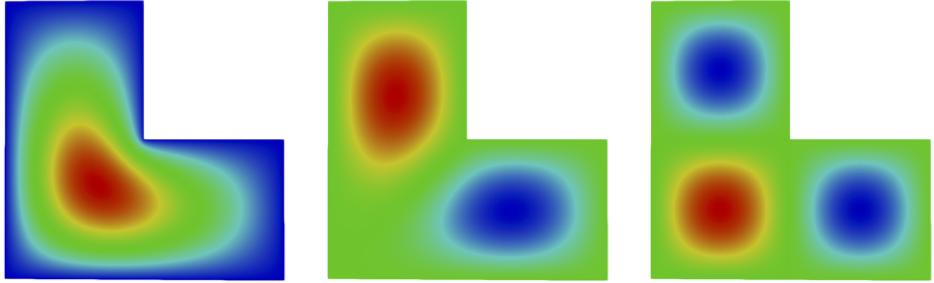}
  \caption{\label{LShapeEigenmodes} L-Shape: Contour plots of the
  computed approximations of the first
  three eigenvectors.}
\end{subfigure}
\caption{\label{LShapeEigenvalue} L-Shape: Convergence rates for 
  $\lambda_1$, $\lambda_2$ and $\lambda_3$.}
\end{figure}

Let $\Omega$ be the $L$-shaped domain that is the concatenation of three
unit squares; see Figure~\ref{LShapeEigenmodes}.  In~\cite{Trefethen2006}, the authors provide very precise
approximations of several eigenvalues for this domain (and other
planar domains).  Their approximations of the first three eigenvalues
(accurate to eight digits) are
\begin{align*}
\lambda_1\approx9.6397238\quad,\quad\lambda_2\approx 15.197252\quad,\quad \lambda_3=2\pi^2\approx 19.739209~,
\end{align*}
and we take the first two of these approximations to be the ``truth'' for purposes of
our convergence studies.  We use the search interval $(0,20)$.

These eigenvalues correspond to eigenvectors having very different
regularities, and the convergence plots in Figure~\ref{LShapeEigenvalue} illustrate that~\eqref{eq:LLhFEM} can
be pessimistic in the sense that it ascribes a single convergence rate
to an entire eigenvalue cluster, and this convergence rate is dictated
by the worst-case regularity of eigenvectors associated with the
cluster.  What we see in practice is that individual eigenvalues
within a cluster converge at rates determined by the regularity of
their corresponding eigenvectors.
Since $\Omega$ has a re-entrant corner with interior angle $3\pi/2$,
we have $r=\min(s,p)=s$ for any $s<2/3$, and the first eigenvector
actually has this regularity.  As such, Theorem~\ref{thm:total}
indicates essentially $\cO(h^{4/3})$ convergence for the cluster.
While this is true for the cluster as a whole, it is only the first
eigenvalue that converges this slowly.  The convergence order for the
second eigenvalue $\cO(h^{\min(2p,3)})$, is consistent with a regularity index $s\leq 3/2$;
and the convergence order for the third eigenvalue, $\cO(h^{2p})$, is precisely what is
expected from an analytic eigenvector.

\subsection{Dumbbell}
Let $\Omega$ be the dumbbell-shaped domain that is a concatenation of two unit-squares
joined by a $1/4\times 1/4$ square ``bridge''; see
Figure~\ref{DumbbellEigenmodes}.  By tiling the dumbbell with
$(1/8)\times(1/8)$ squares, we see that 
$\lambda=128\pi^2\approx 1263.309$ is an eigenvalue, with
corresponding eigenvector $e=\sin(8\pi x)\sin(8\pi y)$.  In order to
determine whether there are other eigenvalues near $128\pi^2$, we
choose the search interval $(1262, 1264)$.  Because of the highly
oscillatory nature of the eigenvector $e$, we employ relatively fine
discretizations, taking $p=3$ and $h=2^{-4}, 2^{-5}, 2^{-6}, 2^{-7}$.
We have determined that there is one other eigenvalue in the search
interval, and it is approximately $1262.41$.  Labeling these
eigenvalues $\lambda_1\leq\lambda_2$, their computed approximations
are given in Table~\ref{DumbbellEigenvalues}.  For the coarsest of
these discretizations, the computed approximation of $128\pi^2$,
$1264.02$, lies slightly outside the search interval, but is accepted
by the algorithm.  Since $\lambda_2=128\pi^2$ is known, we underline the
number of correct digits in our approximations of it.  The error in
this approximation is consistent with $\cO(h^6)$ eigenvalue error,
in agreement with Theorem~\ref{thm:total}.

\begin{table}
\caption{\label{DumbbellEigenvalues} Dumbbell: Computed eigenvalues for
  the interval $(1262,1264)$, $p=3$ and 
  mesh parameters $h=2^{-4}, 2^{-5}, 2^{-6}, 2^{-7}$.}
\begin{tabular}{ccc}
$h$&$\lambda_1$&$\lambda_2$\\\hline
$2^{-4}$&1263.178867&\underline{126}4.020566\\
$2^{-5}$&1262.447629&\underline{1263.3}19956\\
$2^{-6}$&1262.418298&\underline{1263.309}521\\
$2^{-7}$&1262.410062&\underline{1263.30936}6\\
\end{tabular}
\end{table}
\begin{figure}
\begin{center}
\includegraphics[height=1.19in]{./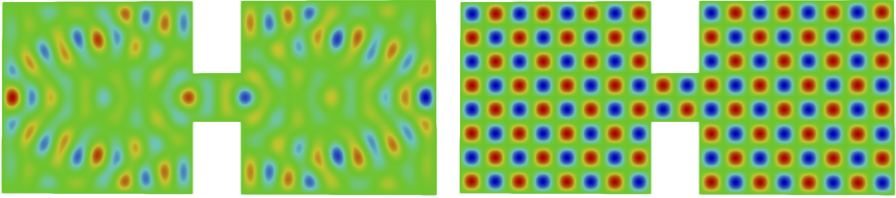}
\end{center}
\caption{\label{DumbbellEigenmodes} Dumbbell: Eigenvectors corresponding
to the eigenvalues $1262.41$ (left) and $128\pi^2$.}
\end{figure}

\def\cprime{$'$}

\end{document}